\newtheorem{theorem}{Theorem}[section]
\newtheorem{lemma}[theorem]{Lemma}
\newtheorem{corollary}[theorem]{Corollary}
\newtheorem{proposition}[theorem]{Proposition}
\newtheorem{claim}[theorem]{Claim}
\newtheorem{obs}[theorem]{Observation}
\newtheorem{constr}[theorem]{Construction}
\newcommand{\ceil}[1]{\left\lceil{#1}\right\rceil}
\newcommand{\floor}[1]{\left\lfloor{#1}\right\rfloor}
\newcommand{\ctmod}[2]{\floor{\frac{#1}{#1-#2}}(#1-#2)}
\newcommand{\floorkl}{\floor{\frac{k}{k-\ell}}(k-\ell)}
\newcommand{\ceilkl}{\ceil{\frac{k}{k-\ell}}}
\newcommand{\dconnect}{1-\frac{1}{\ceilkl}}
\newcommand{\dcover}{1-\frac{1}{\floorkl}}
\begin{document}
\title{Positive codegree thresholds for Hamilton cycles in hypergraphs}
\author{Richard Mycroft\thanks{School of Mathematics, University of Birmingham, UK. {\tt r.mycroft@bham.ac.uk}. RM is grateful for financial support from EPSRC Standard Grant EP/R034389/1.} \and
 Camila Z\'arate-Guer\'en\thanks{School of Mathematics, University of Birmingham, UK. {\tt ciz230@student.bham.ac.uk}.}}
\date{}
\maketitle

\begin{abstract}
    For each $k \geq 3$ and $1 \leq \ell \leq k-1$ we give an asymptotically best possible minimum positive codegree condition for the existence of a Hamilton $\ell$-cycle in a $k$-uniform hypergraph. This result exhibits an interesting duality with its analogue under a minimum codegree condition. The special case $\ell = k-1$ of our result establishes an asymptotic version of a recent conjecture of Illingworth, Lang, M\"uyesser, Parczyk and Sgueglia on tight Hamilton cycles in hypergraphs.
\end{abstract}

\section{Introduction}

Over recent decades, the problem of finding hypergraph analogues of Dirac's theorem~\cite{dirac52} -- that every graph $G$ on $n \geq 3$ vertices with minimum degree $\delta(G) \geq n/2$ admits a Hamilton cycle -- has been the subject of extensive research in extremal graph theory. This is because the question is a natural fundamental problem concerning connected spanning structures which has spurred the development of versatile new techniques that have proved useful much more widely, but also because of the range of possible generalisations: as well as distinct uniformities $k$ for the host hypergraph, there are many natural ways to generalise the concept of a cycle and of degree. For some of these generalisations the problem has been completely resolved, whilst for others we have only conjectures to which the solution remains far out of reach (a more extensive summary of previous work is included in Section~\ref{sec:pastwork}).

Minimum codegree conditions are the strongest commonly-used form of degree condition for a $k$-graph (i.e. $k$-uniform hypergraph) $H$: saying that $H$ has minimum codegree at least $\delta$ means that every set of $k-1$ vertices of $H$ is contained in at least $\delta$ edges of $H$. This condition proves very powerful for constructive arguments, and so the problem of generalising Dirac's theorem to the hypergraph setting under minimum codegree conditions has been essentially resolved. On the other hand, minimum codegree conditions are too strong for many natural settings. For example, multipartite $k$-graphs have received much research attention in their own right, but by definition have minimum codegree zero. More generally, say that a set $S$ in a $k$-graph $H$ is a strong independent set if no edge of $H$ has more than one vertex in $S$ (for example, this is true of each vertex class in a $k$-partite $k$-graph $H$). The existence of a strong independent set~$S$ in~$H$  which contains at least two vertices forces the minimum codegree of $H$ to be zero.

An ideal notion of minimum degree for hypergraphs would retain the constructive power of the minimum codegree condition but also be sufficiently versatile to be applicable in $k$-graphs with non-trivial strong independent sets. This motivates the consideration of minimum \emph{positive} codegree conditions, where the codegree condition applies only to sets of $k-1$ vertices which are contained in at least one edge of $H$. To be more precise, let $H$ be a $k$-graph. We write $\deg(S)$ for the degree of a set $S \subseteq V(H)$, which is the number of edges containing $S$ as a subset. The minimum positive codegree of $H$ a $k$-graph $H$, denoted $\delta^+(H)$, is the minimum of $\deg(S)$ over all sets $S$ of $k-1$ vertices of $H$ which satisfy $\deg(S) \geq 1$. By contrast, the minimum codegree of $H$, denoted $\delta(H)$, is the minimum of $\deg(S)$ over all sets $S$ of $k-1$ vertices of $H$. Note that a minimum positive codegree condition on $H$ does not prohibit the existence of isolated vertices on $H$, so when we wish to find spanning structures in $H$ we also require the (very weak) condition that $H$ has no isolated vertices. 

The main result of this manuscript is to provide an analogue of Dirac's theorem for $k$-uniform hypergraphs under minimum positive codegree conditions. The structures we obtain are the most-studied generalisations of cycles in graphs to the $k$-graph setting. Specifically, for $1 \leq \ell \leq k-1$ a $k$-uniform $\ell$-cycle is a $k$-graph $C$ with no isolated vertices whose vertices are cyclically ordered so that every edge consists of $k$ consecutive vertices, and adjacent edges intersect in precisely $\ell$ vertices. Note in particular that the number of vertices in a $k$-uniform $\ell$-cycle must be a multiple of $k-\ell$, so the condition that $k-\ell$ divides $n$ is necessary for the existence of a Hamilton $\ell$-cycle in a $k$-graph on $n$ vertices. Our main result gives for every $k \geq 3$ and $1 \leq \ell \leq k-1$ a minimum positive codegree condition which is sufficient to ensure the existence of a Hamilton $\ell$-cycle in a $k$-graph and which is best possible up to the $\alpha n$ error term (see Section~\ref{sec:optimality} for further details on the optimality).

\begin{theorem} \label{main}
For all $k \geq 3$, $1\leq \ell \leq k- 1$ and $\alpha > 0$ there exists $n_0$ such that the following holds for every $n \geq n_0$ which is divisible by $k-\ell$. If $H$ is a $k$-graph on $n$ vertices with 
$$\delta^+(H) \geq \left(\dcover \right)n + \alpha n,$$
and $H$ has no isolated vertices,
then $H$ contains a Hamilton $\ell$-cycle.
\end{theorem}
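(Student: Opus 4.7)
The plan is to apply the absorbing method, which is by now standard for Dirac-type problems in hypergraphs: we build the Hamilton $\ell$-cycle by combining (i) a short \emph{absorbing} $\ell$-path $P_A$ that can incorporate any small leftover set of vertices of size divisible by $k-\ell$, (ii) a small \emph{reservoir} $R$ useful for splicing, (iii) an almost-perfect cover of the remaining vertices by short $\ell$-paths, and (iv) a final combining step joining all of these into a single $\ell$-cycle. Set $m := \floorkl$ so the hypothesis reads $\delta^+(H) \geq (1 - 1/m)n + \alpha n$, and fix constants $\eta \ll \mu \ll \alpha$.

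The core technical tool is a \emph{connecting lemma}: for almost every pair of ordered $\ell$-tuples $A,B$ in $H$ there is an $\ell$-path from $A$ to $B$ of bounded length avoiding any prescribed small set $Z$. The natural proof iteratively extends $A$ by one $(k-\ell)$-segment at a time, using that every $(k-1)$-tuple of positive codegree has at least $(1 - 1/m + \alpha)n$ extensions. The subtlety specific to the positive codegree setting is that $(k-1)$-tuples may have degree zero, so one must identify a class of ``typical'' $\ell$-tuples whose one-step reachable set has density at least roughly $1 - 1/m$ in $V(H)^{\ell}$, argue that almost all $\ell$-tuples are typical, and then deduce by counting that the forward-reachable set from $A$ and the backward-reachable set from $B$ intersect after $O(1)$ steps. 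Given the connecting lemma, I would construct $P_A$ via the R\"odl--Ruci\'nski--Szemer\'edi approach: show that for each vertex $v$ there are polynomially many short ``absorbers'' (short $\ell$-paths whose internal vertices can be reconfigured to include $v$ while keeping the endpoints fixed), pick a random subcollection covering each vertex the right number of times, and splice these absorbers into a single $\ell$-path via repeated application of the connecting lemma. The reservoir $R$ is a random set of size $\Theta(\mu n)$ chosen so that the connections needed later remain available inside $R$.

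The main step specific to the positive codegree setting is the \emph{almost-cover}: in $H' := H - V(P_A) - R$ we must find a collection of vertex-disjoint $\ell$-paths covering all but at most $\eta n$ vertices. The value $1 - 1/m$ enters here precisely because $\dcover$ is the asymptotically tight threshold for the existence of a near-perfect \emph{fractional} $\ell$-path cover, as witnessed by the extremal constructions indicated in the introduction. A natural plan is to apply a weak hypergraph regularity lemma to $H'$, show that the positive codegree condition descends (up to $o(1)$ error) to the reduced cluster multigraph, and then use an LP/rounding argument together with the regularity-blowup method to convert an almost-perfect fractional $\ell$-cycle cover of the reduced graph into an integral almost-cover in $H'$.

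Finally, one uses the connecting lemma inside $R$ to splice the $\ell$-paths produced by the almost-cover together with the absorbing path $P_A$ into a single almost-spanning $\ell$-cycle, then invokes the absorbing property of $P_A$ to swallow the remaining uncovered vertices and close up to a Hamilton $\ell$-cycle. I expect the main obstacle to be the almost-cover step: under a positive codegree hypothesis $(k-1)$-tuples of degree zero may cluster together after regularity and destroy the naive rounding of the fractional cover, so one has to show that the ``zero-codegree'' substructure is itself constrained enough (for instance, by showing that vertices incident to many zero-codegree tuples form a strong independent set of limited size). The connecting lemma in the presence of zero-codegree tuples is the second technical hurdle, and it is the reason the proof is genuinely more delicate than the corresponding codegree argument.
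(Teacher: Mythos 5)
Your high-level plan (absorbing method: connecting lemma, absorbing path, reservoir/random splicing set, almost-cover, splice and absorb) matches the paper's skeleton exactly, and you correctly identify the almost-cover as the step where the threshold $1 - 1/\floorkl$ must genuinely enter. But the one idea that actually makes that step close at this threshold is missing from your sketch, and the difficulty you anticipate is not quite the one the paper has to negotiate.

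The paper's path tiling lemma works as follows: apply weak regularity, push the positive codegree down to the cluster $k$-graph (this uses a subgraph-recovery lemma of Halfpap--Lemons--Palmer, not an argument that ``zero-codegree tuples form a small strong independent set''), and then find a perfect $(w_1,\dots,w_k)$-\emph{weighted} fractional matching in the cluster graph with $w_1 = k-1$ and $w_2=\dots=w_k = \ctmod{k}{\ell}-1$, constructed via Farkas' lemma. These weights are not arbitrary: they match the most unbalanced $k$-partition an $\ell$-path admits (Proposition~\ref{unbalancedpath}), with one small class of relative size $\frac{k-1}{W}$ and $k-1$ large classes of relative size $\frac{\ctmod{k}{\ell}-1}{W}$, where $W=(k-1)\ctmod{k}{\ell}$. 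An \emph{unweighted} perfect (or near-perfect, or fractional) matching in the cluster graph is not available below codegree $\approx \frac{k-1}{k}$, which strictly exceeds $\dcover$ whenever $k-\ell$ does not divide $k$; so ``almost-perfect fractional $\ell$-cycle cover plus rounding'' is the right intuition, but without the weighted matching formulation and the Farkas argument the plan does not have an executable core. You also do not identify the blow-up mechanism (filling each regular $k$-tuple by unbalanced $\ell$-paths whose class sizes track the weighted matching). Separately, your proposed connecting lemma via typical $\ell$-tuples and reachability is more elaborate than needed: because each vertex of an $\ell$-path lies in at most $\ceilkl$ edges of the path, a direct one-vertex-at-a-time greedy extension works already at codegree $\left(1-1/\ceilkl\right)n + \alpha n$ (note $\floorkl \geq \ceilkl$), so connecting is strictly cheaper than tiling and needs no classification of typical tuples.
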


A $k$-uniform $(k-1)$-cycle is also referred to as a tight cycle. For this case Illingworth, Lang, M\"uyesser, Parczyk and Sgueglia~\cite{ILMPS} recently conjectured that every $k$-graph on $n$ vertices with $\delta^+(H) \geq \frac{k-1}{k} n$ and no isolated vertices contains a tight Hamilton cycle. The case $\ell = k-1$ of Theorem~\ref{main} confirms that this conjecture holds asymptotically: every $k$-graph on $n$ vertices with $\delta^+(H) \geq \frac{k-1}{k} n + o(n)$ contains a tight Hamilton cycle. While we were finalising this manuscript, Letzter and Ranganathan announced a proof of the exact version of this conjecture; we understand that their work only considers tight Hamilton cycles and not Hamilton $\ell$-cycles for any $\ell \leq k-2$.

\subsection{Connection to previous work} \label{sec:pastwork}

\subsubsection{Minimum codegree conditions for Hamilton cycles}
The question of finding best possible minimum codegree conditions which ensure the existence of a Hamilton cycle in a $k$-graph on $n$ vertices -- extending Dirac's theorem to the hypergraph setting -- was a key goal in extremal graph theory for many years. We summarise the results briefly here, and recommend the surveys of K\"uhn and Osthus~\cite{KOSurvey}, R\"odl and Ruci\'nski~\cite{RRSurvey} and Zhao~\cite{ZhaoSurvey} for a more detailed exposition. The first non-trivial bounds on the minimum codegree which guarantees a tight Hamilton cycle in a $k$-graph were provided by Katona and Kierstead~\cite{hh-katona99}, and the best possible condition was then determined asymptotically first for $k=3$ then for every $k \geq 3$ by  R\"odl, Ruci\'nski and Szemer\'edi~\cite{hh-rodl06, hh-rodl08}. These results in fact establish an asymptotically best possible minimum codegree condition for a Hamilton $\ell$-cycle in a $k$-graph for  each $\ell \in [k-1]$ for which $\ell$ divides $k$. Finally, for those $\ell \in [k-1]$ for which $k-\ell$ does not divide $k$, the best possible minimum codegree condition for a Hamilton $\ell$-cycle in a $k$-graph was determined by a series of works by K\"uhn and Osthus~\cite{pp-kuhn06-cherry}, Keevash, K\"uhn, Mycroft and Osthus~\cite{hh-keevash11}, H\`an and Schacht~\cite{hh-han10} and K\"uhn, Mycroft and Osthus~\cite{KMO14}. Collectively these results give the following theorem, which establishes an asymptotically best possible minimum codegree condition for a Hamilton $\ell$-cycle in a $k$-graph for every $\ell$ and $k$.

\begin{theorem}[\cite{hh-han10, hh-keevash11, KMO14, pp-kuhn06-cherry, hh-rodl06, hh-rodl08}] \label{codeg}
  For all $k \geq 3$, $1 \leq \ell < k$ and $\alpha > 0$, there exists~$n_0$ such that if $n \geq
  n_0$ is divisible by $k-\ell$ and $H$ is a $k$-graph on $n$ vertices with $$\delta(H) \geq 
  \begin{cases}
    \left( \frac{1}{2} + \alpha \right) n& \mbox{ if $k-\ell $ divides $k$,} \\
    \left(\frac{1}{\lceil 
    \frac{k}{k-\ell} \rceil(k-\ell)}+\alpha\right) n & \mbox{otherwise,}
  \end{cases} 
  $$ 
  then $H$ contains a Hamilton $\ell$-cycle. 
\end{theorem}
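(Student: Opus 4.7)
The plan is to prove this theorem via the absorbing method of R\"odl, Ruci\'nski and Szemer\'edi, which underlies all of the cited papers. The strategy combines four ingredients: a short absorbing $\ell$-path $P_A$, a small reservoir set $R$, a connection lemma, and an almost-spanning $\ell$-path covering most of the remainder of the hypergraph. Once these are in place, one uses $R$ and the connection lemma to stitch the almost-spanning path to $P_A$ and close the cycle, and the absorbing property of $P_A$ then swallows the few leftover vertices. Since the threshold in the statement is known to be asymptotically sharp with two distinct extremal constructions (depending on whether $k-\ell$ divides $k$), the proof must verify that the constant $c := 1/2$ in the divisibility case and $c := 1/(\lceil k/(k-\ell) \rceil (k-\ell))$ otherwise suffices in each of the four sub-tasks.

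First I would establish a connection lemma: under $\delta(H) \geq (c+\alpha)n$, any two disjoint ordered $(k-1)$-tuples extendable to edges in $H$ are joined by an $\ell$-path of bounded length, using only vertices from any fixed linear-sized reservoir. The proof is greedy, invoking the codegree bound at each step to find many common neighbours of the current end-tuple. A random reservoir $R$ of size $\mu n$ with $\mu \ll \alpha$ then simultaneously serves as a connecting set for all polynomially many pairs of end-tuples we will ever need. Next I would construct $P_A$: an \emph{absorber} for a vertex $v$ is a short $\ell$-path whose end-tuples do not change when $v$ is inserted at a specified interior position, and the codegree condition yields $\Omega(n^t)$ absorbers per vertex for some fixed $t = t(k,\ell)$. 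A random subcollection of these produces a short $\ell$-path $P_A$ that can absorb any sufficiently small vertex set disjoint from $V(P_A)$.

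After removing $V(P_A) \cup R$ from $H$ it remains to cover nearly all of the remaining vertices by a single $\ell$-path; this almost-covering step is where the precise value of $c$ enters and is the main obstacle. The standard approach applies the weak hypergraph regularity lemma, passes to a reduced $k$-graph $R^*$ inheriting an only slightly weakened codegree condition, and reduces the problem to a fractional $\ell$-path tiling in $R^*$. Here the two regimes diverge qualitatively: when $k - \ell$ divides $k$ the extremal barrier is a balanced bipartition and the threshold $1/2$ behaves like a space-barrier on one class, whereas otherwise the extremal example is an uneven cover construction that forces the threshold $1/(\lceil k/(k-\ell)\rceil(k-\ell))$.

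The technical heart of the proof is then showing that $\delta(R^*) \geq (c + \alpha/2)|V(R^*)|$ forces an almost-perfect fractional tiling by short $\ell$-paths in each regime, and translating this tiling back into an actual long $\ell$-path in $H$ using the regularity structure. To handle near-extremal instances I would split into an extremal case, where $H$ is structurally close to the relevant extremal construction and admits a direct ad hoc argument, and a non-extremal case, where the regularity-plus-tiling argument in $R^*$ goes through with slack. Carrying out this case distinction cleanly, and proving matching tiling thresholds for the two regimes with the correct dependence on $k$ and $\ell$, is the hardest and most delicate part of the argument.
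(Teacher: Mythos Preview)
This theorem is not proved in the paper at all: it is stated as Theorem~\ref{codeg} in the section on previous work (Section~\ref{sec:pastwork}), with the proof attributed entirely to the cited references~\cite{hh-han10, hh-keevash11, KMO14, pp-kuhn06-cherry, hh-rodl06, hh-rodl08}. The paper presents it only as background against which to frame its main result, Theorem~\ref{main}, on the \emph{positive} codegree threshold. So there is no ``paper's own proof'' for you to compare against.

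Your sketch is a fair high-level summary of the absorbing-method architecture that those cited papers use (and that the present paper adapts for Theorem~\ref{main}), but note a couple of inaccuracies relative to how those arguments actually run. First, the connecting and absorbing lemmas in those works do not require an extremal/non-extremal case split; that split is only needed when one seeks the exact threshold, whereas Theorem~\ref{codeg} is the asymptotic statement and is handled uniformly. Second, your claim that ``the precise value of $c$ enters'' only at the almost-covering step is not quite right: in the divisibility case $k-\ell \mid k$ the connecting lemma already needs codegree close to $n/2$ (a tight path must pass through every $(k-1)$-set, and the threshold for tight connectivity is $n/2$), so the constant $1/2$ is forced there as well as at the tiling step. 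In the non-divisibility case the tiling threshold $1/(\lceil k/(k-\ell)\rceil(k-\ell))$ is indeed the binding constraint, and the connecting/absorbing steps go through under weaker codegree.
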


Ideally we would be able to identify the best possible minimum codegree condition for a Hamilton $\ell$-cycle in a $k$-graph precisely -- at least for large $n$ -- rather than asymptotically. This has been done for the case $k=3, \ell=2$ by R\"odl, Ruci\'nski and Szemer\'edi~\cite{RoRuSz09}, for the case $k=3, \ell=1$ by Czygrinow and Molla~\cite{CM}, for the cases $k \geq 3$ and $\ell < k/2$ by Han and Zhao~\cite{HZ}, for the case $k=4$ and $\ell=2$ by Garbe and Mycroft~\cite{GM}, and for the case $k \geq 6$ and $\ell = k/2$ by H\`an, Han and Zhao~\cite{HHZ}. To our knowledge the precise best possible condition remains an open question for all other cases of $k$ and $\ell$.

\subsubsection{Minimum positive codegree conditions}
The study of minimum positive codegree conditions was proposed by Balogh, Lemons and Palmer~\cite{BLP}, who used it to give a generalisation of the Erd\H{o}s-Ko-Rado theorem on intersecting families. The notion quickly saw further attention in hypergraph Tur\'an theory. Halfpap, Lemons and Palmer~\cite{HLP} established for several fixed $3$-graphs $F$ the asymptotic minimum positive codegree threshold for the existence of a copy of $F$ in a $3$-graph $H$ on $n$ vertices, providing an interesting variation on the well-studied analogous question for minimum codegree. They also studied the ``jumps" in this parameter, a question which was recently further addressed by Balogh, Halfpap, Lidick\'y, and Palmer~\cite{BHLP}.

We turn now to the question of finding spanning structures in $k$-graphs satisfying minimum positive codegree conditions. Let $H$ be a $k$-graph on $n$ vertices with no isolated vertices. Halpap and Magnan~\cite{HM} showed that for $k=3$, if $\delta^+(H) \geq 2n/3-1$ and $3$ divides $n$ then~ $H$ contains a perfect matching, and moreover that this minimum positive codegree condition is best possible in an exact sense. They also gave an asymptotically best possible minimum positive codegree condition for a perfect matching in $H$ for each $k \geq 4$. In the same paper they established a precise best possible minimum positive codegree condition on $H$ for the existence of a Hamilton Berge cycle (a weaker notion of $k$-graph cycle). Finally, they showed that if $k=3$ and $\delta^+(H) \geq n/2 + o(n)$ then $H$ contains a Hamilton 1-cycle (also known as a loose Hamilton cycle), assuming the necessary condition that $n$ is even, and gave a construction to demonstrate that this condition is best possible up to the $o(n)$ error term. Note that this result coincides with the case $k=3, \ell=1$ of Theorem~\ref{main}.

Illingworth, Lang, M\"uyesser, Parczyk and Sgueglia~\cite{ILMPS} studied the minimum positive codegree needed to ensure the existence of a spanning sphere in a $k$-graph $H$ on $n$ vertices, and showed that the condition $\delta^+(H) \geq n/2 +o(n)$ suffices. This confirmed the asymptotic form of a conjecture by Georgakopoulos, Haslegrave, Montgomery and Narayanan~\cite{GHMN} that the condition $\delta^+(H) \geq n/2$ should suffice for this. The latter authors also observed that this conjecture, if true, would be best possible. %Illingworth, Lang, M\"uyesser, Parczyk and Sgueglia~\cite{ILMPS} went on to consider the question of the minimum positive codegree needed to ensure the existence of a tight Hamilton cycle in $H$, conjecturing that $\delta^+(H) \geq \frac{k-1}{k}n$ suffice. As noted earlier, the case $\ell=k-1$ of Theorem~\ref{main} gives an asymptotic version of this conjecture.

\subsection{Discussion}

Our main result, Theorem~\ref{main}, establishes an asymptotically best possible minimum positive codegree condition for the existence of a Hamilton $\ell$-cycle in a hypergraph. This can be seen as a direct parallel of Theorem~\ref{codeg} -- the product of a longterm sequence of work by many researchers -- which establishes the best possible minimum codegree condition for this property. 

We perceive an interesting duality between the two results. Specifically, suppose that for a given $k$-graph $F$ on $n$ vertices we want to construct a $k$-graph $H$ on $n$ vertices which does not contain $F$ as a spanning subgraph. One way to do this is to ensure that $H$ contains an independent set $I$ such that $V(H) \setminus I$ is smaller than any vertex cover of $F$. Indeed, if there is a copy of $F$ in $H$ then the vertices of $F$ not in $I$ must form a vertex cover of $F$. Another possibility is to ensure that $H$ contains a strong independent set $S$ whose size is larger than the largest strong independent set in $F$. Indeed, if there is a copy of $F$ in $H$ then the vertices of $F$ in $S$ must form a strong independent set in $F$. Observe that the sizes of $I$ and $S$ are determined by dual parameters: the sizes of the minimum vertex cover and maximum strong independent set, respectively.

The link to Theorems~\ref{main} and~\ref{codeg} is that a non-trivial minimum codegree condition on $H$ prohibits the existence of a strong independent set in $H$ containing more than one vertex. On the other hand, a minimum vertex cover of a $k$-graph $\ell$-cycle on $n$ vertices has size around $\frac{n}{\lceil \frac{k}{k-\ell} \rceil(k-\ell)}$. This gives rise to the minimum codegree condition of Theorem~\ref{codeg} for the case where $k-\ell$ does not divide $k$ (and even in the case where $k-\ell$ does divide $k$, this is a necessary and sufficient condition to give a path tiling lemma analogous to our Lemma~\ref{coverlemma}): this is the minimum codegree on $H$ needed to ensure that every independent set in $H$ excludes as least $\frac{n}{\lceil \frac{k}{k-\ell} \rceil(k-\ell)}$ vertices. 
By contrast, minimum positive codegree conditions do not prevent the existence of strong independent sets. Instead, the condition of Theorem~\ref{main} is asymptotically best possible to prevent the existence of a strong independent set of size at least $\frac{n}{\lfloor \frac{k}{k-\ell} \rfloor(k-\ell)}$, which is the approximate size of a largest strong independent set in an $\ell$-path or $\ell$-cycle $k$-graph (see Proposition~\ref{indepsetsinpath}).

As described in the introduction, we see minimum positive codegree conditions as being a very natural and logical object of study. We hope that future work will establish the minimum positive codegree thresholds for other spanning structures in $k$-graphs, such as perfect $F$-tilings for arbitrary $k$-graphs $F$. We note that determining best possible minimum codegree conditions for $F$-tilings in $k$-graphs remains a fiendishly difficult problem, with almost all cases still unsolved for $k\geq 3$. Our hope is that identifying best possible minimum positive codegree conditions for these structures will prove to be a more tractable problem that will also help to advance our understanding of the minimum codegree thresholds. We expect that these questions will see similar duality between the minimum codegree thresholds and minimum positive codegree thresholds as in the situation described above for Hamilton $\ell$-cycles.

It would be nice to improve Theorem~\ref{main} to an exact best possible minimum codegree condition, at least for large $n$. However, the analysis of cases in which $H$ is close to the extremal construction threatens to be technically challenging; in particular we note that even after many years the analogous problem of improving Theorem~\ref{codeg} to a best possible exact statement remains open for all $\ell > k/2$ except for the case $\ell=2, k=3$.

Finally, we highlight the use of a \emph{weighted} perfect fractional matching in the proof of Lemma~\ref{coverlemma}. In our opinion this is a versatile tool which is likely to be useful to give simpler arguments for many related problems.

\subsection{Notation and preliminaries}

We use the following, standard, notation in this manuscript. For each $n \in \mathbb{N}$ we write $[n] = \{1, 2, \dots, n\}$, and for a set $X$ we write $\binom{X}{n}$ to denote the set of all subsets of $X$ of size $n$. 

Let $H$ be a $k$-graph, and fix sets $S, W \subseteq V(H)$. We have already defined the degree $\deg(S)$ of $S$ to be the number of edges of $H$ which contain $S$ as a subset. In the case $|S| = k-1$ an equivalent statement is that $\deg(S)$ is the number of vertices $x \in V(H)$ for which $S \cup \{x\}$ is an edge of $H$. In this case we also define $\deg(S, W)$ to be the number of vertices $x \in W$ for which $S \cup \{x\}$ is an edge of $H$, and we write $N(S)$ for the neighbourhood of $S$, that is, the set of vertices $x \in V(H)$ for which $S \cup \{x\}$ is an edge of $H$ (so $|N(S)| = \deg(S)$). Wherever necessary we avoid ambiguity by writing $N_H(S)$, $\deg_H(S)$ or $\deg_H(S, W)$ in place of $N(S)$, $\deg(S)$ or $\deg(S, W)$ respectively.

A $k$-graph $H$ is $r$-partite if it is possible to partition $V(H)$ into $r$ sets, called vertex classes, so that every edge of $H$ has at most one vertex in each vertex class. If $F$ and $H$ are $r$-partite $k$-graphs with vertex classes $U_1, \dots, U_r$ and $V_1, \dots V_r$ respectively (including a specified order on the vertex classes of each) then a \emph{partition-respecting copy} of $F$ in $H$ is a copy of $F$ in $H$ in which for each $i \in [r]$ the vertices of $U_i$ are embedded in $V_i$.

We write $x \ll y$ to mean that for all $y > 0$ there exists $x_0 > 0$ such that for every $x > 0 $ with $x < x_0$ the subsequent statements hold, and define similar statements with more variables similarly. Also we write $a=b\pm c$ to mean that $b-c \leq a \leq b+c$.

We use the following concentration bound on binomial distributions. 

\begin{theorem}\cite[Corollary 2.3]{JLR00:randomgraphs} \label{chernoff}
  If~$0<a<3/2$ and~$X$ is a binomial random variable then~$\mathbb{P}\bigl(\,|X-\mathbb{E} X|\geq a\mathbb{E} X\,\bigr)\leq 2\exp(-a^2 \mathbb{E} X/3)$.
\end{theorem}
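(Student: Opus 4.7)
The plan is to invoke the standard Chernoff--Cram\'er exponential moment method: bound a tail probability by applying Markov's inequality to $e^{tX}$ and then optimise over the parameter $t$. Write $X = \sum_{i=1}^n X_i$ as a sum of independent Bernoulli$(p)$ random variables and let $\mu = \mathbb{E}X = np$. Using the elementary inequality $1 + x \leq e^x$, I would first observe that $\mathbb{E}[e^{tX_i}] = 1 + p(e^t - 1) \leq \exp(p(e^t - 1))$ for every real $t$, and hence $\mathbb{E}[e^{tX}] \leq \exp(\mu(e^t - 1))$ by independence.

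For the upper tail, fix $t > 0$ and apply Markov's inequality to $e^{tX}$:
\[
\mathbb{P}\bigl(X \geq (1+a)\mu\bigr) \leq e^{-t(1+a)\mu}\,\mathbb{E}[e^{tX}] \leq \exp\bigl(\mu(e^t - 1) - t(1+a)\mu\bigr).
\]
Choosing $t = \ln(1+a) > 0$ minimises this exponent and yields
\[
\mathbb{P}\bigl(X \geq (1+a)\mu\bigr) \leq \exp\bigl(-\mu\bigl((1+a)\ln(1+a) - a\bigr)\bigr).
\]
The remaining step is to verify the analytic inequality $g(a) := (1+a)\ln(1+a) - a - a^2/3 \geq 0$ on $[0, 3/2]$. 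I would check that $g(0) = g'(0) = 0$ and $g''(a) = 1/(1+a) - 2/3$ is positive on $[0, 1/2]$ and negative on $[1/2, 3/2]$, so $g$ is convex then concave on this range. Therefore $g \geq 0$ on the convex piece, and on the concave piece it suffices to know $g(3/2) = \tfrac{5}{2}\ln\tfrac{5}{2} - \tfrac{9}{4} > 0$ (a direct numerical check). Together these give the upper-tail bound $\mathbb{P}(X \geq (1+a)\mu) \leq \exp(-a^2\mu/3)$.

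For the lower tail I would repeat the argument with $t = \ln(1-a) < 0$ whenever $0 < a < 1$, obtaining $\mathbb{P}(X \leq (1-a)\mu) \leq \exp(-\mu((1-a)\ln(1-a) + a))$, and verify the companion inequality $h(a) := (1-a)\ln(1-a) + a - a^2/2 \geq 0$ via $h(0) = h'(0) = 0$ and $h''(a) = a/(1-a) \geq 0$ on $[0,1)$; since $a^2/2 \geq a^2/3$, this is stronger than needed. For $a \in [1, 3/2)$ the event $X \leq (1-a)\mu$ forces $X = 0$ (or is vacuous), so it has probability at most $(1-p)^n \leq e^{-\mu} \leq e^{-a^2\mu/3}$ trivially. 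A union bound over the two tails produces the factor of $2$ and completes the proof.

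The main obstacle is not conceptual but quantitative: the constant $1/3$ in the statement is nearly sharp for the prescribed range, so the key inequality $g(a) \geq a^2/3$ is very tight near $a = 3/2$ (with $g(3/2) \approx 0.04$) and the convex-then-concave structure of $g$ really must be exploited rather than a cruder global estimate. If the range were relaxed to $0 < a \leq 1$ the inequality $g \geq 0$ would follow from a single global convexity argument, but pushing all the way to $3/2$ is what forces one to evaluate $g$ explicitly at the endpoint.
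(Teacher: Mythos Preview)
The paper does not prove this statement at all: it is quoted as Corollary~2.3 of Janson, \L uczak and Ruci\'nski's \emph{Random Graphs} and used as a black box, so there is no ``paper's own proof'' to compare against. Your argument is the standard Chernoff--Cram\'er derivation and is correct. One small point worth making explicit: when you pass to the concave piece $[1/2,\,3/2]$ you appeal only to $g(3/2)>0$, but concavity gives $g\ge 0$ on an interval only once you know nonnegativity at \emph{both} endpoints; the missing endpoint value $g(1/2)\ge 0$ is of course already furnished by the convex-piece argument, so the logic is sound, just slightly telegraphic. Everything else (the lower-tail computation, the treatment of $a\ge 1$ via $\mathbb{P}(X=0)\le e^{-\mu}\le e^{-a^2\mu/3}$ since $a<3/2<\sqrt{3}$, and the final union bound) checks out.
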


The next proposition is very useful for constructing edges one vertex at a time using a minimum positive codegree condition.

\begin{proposition} \label{choosevs}
Let $H$ be a $k$-graph. The following properties hold for every set $S \subseteq V(H)$ with $|S| \leq k-1$ and $\deg(S) \geq 1$. 
\begin{enumerate}[label=(\alph*), noitemsep]
    \item There are at least $\delta^+(H)$ vertices $x \in V(H)$ for which $\deg(S \cup \{x\}) \geq 1$.
    \item For every set $U \subseteq V(H)$ with $|U| > |V(H)| - \delta^+(H)$ we have $\deg_{H[U]}(S) \geq 1$.
    \item $\deg(S) \geq \binom{\delta^+(H)}{k-|S|}$.
\end{enumerate}
\end{proposition}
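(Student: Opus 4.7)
The plan is to prove the three parts in sequence via iterated extension, driven by the defining property that any $(k-1)$-set $T$ with $\deg(T)\geq 1$ satisfies $\deg(T)\geq \delta^+(H)$. Part (a) is the key step, and then (b) and (c) follow quickly. For (a) I split on $|S|$: if $|S|=k-1$ then $\deg(S)\geq \delta^+(H)$ directly, and each $x\in N(S)$ satisfies $\deg(S\cup\{x\})\geq 1$ since $S\cup\{x\}$ is an edge; if $|S|<k-1$, I pick any edge $e\supseteq S$ and any $(k-1)$-set $T$ with $S\subseteq T\subseteq e$, and apply the first case to $T$ to get $\geq \delta^+(H)$ vertices $x\notin T$ with $T\cup\{x\}\in E(H)$, each of which satisfies $\deg(S\cup\{x\})\geq 1$ because $S\cup\{x\}\subseteq T\cup\{x\}$.

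For (b) I iteratively extend $S$ to a $k$-set lying in $U$ (implicitly assuming $S\subseteq U$, else the claim is vacuous). At each step, given $S_i\subseteq U$ of size $<k$ with $\deg(S_i)\geq 1$, part (a) provides $\geq \delta^+(H)$ vertices $x\notin S_i$ with $\deg(S_i\cup\{x\})\geq 1$; since $|V(H)\setminus U|<\delta^+(H)$, at least one such $x$ lies in $U$, and I set $S_{i+1}:=S_i\cup\{x\}$. After $k-|S|$ steps $S_{k-|S|}$ is a $k$-set of positive degree, hence an edge of $H[U]$ containing $S$.

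For (c), set $d=\delta^+(H)$ and $r=k-|S|$, and count ordered $r$-tuples $(x_1,\ldots,x_r)$ of distinct vertices in $V(H)\setminus S$ such that $S\cup\{x_1,\ldots,x_r\}\in E(H)$. Each edge containing $S$ accounts for exactly $r!$ such tuples, so the count equals $r!\,\deg(S)$. Iterating (a), at step $i+1$ one finds $\geq d$ valid choices for $x_{i+1}\notin S\cup\{x_1,\ldots,x_i\}$ with $\deg(S\cup\{x_1,\ldots,x_{i+1}\})\geq 1$, yielding $\geq d^r$ ordered tuples in total. Hence $\deg(S)\geq d^r/r!\geq \binom{d}{r}$, the last inequality from $d(d-1)\cdots(d-r+1)\leq d^r$. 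No step presents a real obstacle; the only care needed is to ensure at each iteration that the set being extended has size $\leq k-1$ and positive degree, so that (a) legally applies.
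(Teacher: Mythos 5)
Your proof is correct and takes essentially the same route as the paper: part (a) is established by extending $S$ to a $(k-1)$-set $T$ inside an edge containing $S$ and invoking the definition of $\delta^+(H)$ at $T$, and parts (b) and (c) both follow by iterating (a) exactly as you do. The only cosmetic difference is your case split on $|S|$ in (a); the paper handles both cases uniformly by always setting $T=e\setminus\{y\}$ for some $y\in e\setminus S$.
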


\begin{proof}
For (a) note that since $\deg(S) \geq 1$ there exists an edge $e \in E(H)$ with $S \subseteq e$. Choose any $y \in e \setminus S$, and let $T := e \setminus \{y\}$, so $T \in \binom{V(H)}{k-1}$, $\deg(T) \geq 1$ and $S \subseteq T$. It follows from the definition of $\delta^+(H)$ that there are at least $\delta^+(H)$ vertices $x \in V(H)$ for which $T \cup \{x\} \in E(H)$. For each such $x$ we have $S \cup \{x\} \subseteq T \cup \{x\}$ and therefore $\deg(S \cup \{x\}) \geq 1$.

For (b) let $r = k-|S|$, and choose vertices $x_1, x_2, \dots, x_r \in U$ in turn so that for each $i \in [r]$ we have $\deg(S \cup \{x_1, \dots, x_i\}) \geq 1$. This is possible since by (a) at least $\delta^+(H)$ vertices $x_i$ have this property, and fewer than $\delta^+(H)$ are not in $U$.

For (c) note that by (a) there are at least $\delta^+(H)^{r}$ ordered sequences $(x_1, x_2, \dots, x_r)$ for which $S \cup \{x_1, x_2, \dots, x_r\} \in E(H)$. Since there are only $r!$ possible orders for a set of $r$ vertices, the desired bound follows. 
\end{proof}

We also use the following simple observation to compare the minimum positive codegree conditions of different results we use in our proof.

\begin{obs} \label{observation} For every $k \geq 3$ and $1 \leq \ell \leq k-1$ we have $\ctmod{k}{\ell} \geq \ceilkl.$
\end{obs}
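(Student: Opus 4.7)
The inequality to prove is purely arithmetic, so the plan is just a short case analysis based on the Euclidean division of $k$ by $k-\ell$. Write $d := k-\ell$, so by the hypothesis $1 \leq d \leq k-1$, and set $q := \floor{k/d}$ and let $r$ be the remainder, so that $k = qd + r$ with $0 \leq r < d$. The target inequality becomes $qd \geq \ceil{k/d}$.

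I would split on whether $d$ divides $k$. If $r = 0$, then $\ceil{k/d} = q$, so it suffices to check $qd \geq q$, i.e. $d \geq 1$ (which holds) together with $q \geq 1$; the latter follows because $d \leq k-1 < k$. If instead $r \geq 1$, then $\ceil{k/d} = q+1$, so we need $qd \geq q+1$, or equivalently $q(d-1) \geq 1$. The case $d=1$ is impossible here since $d=1$ forces $r=0$; hence $d \geq 2$, so $d-1 \geq 1$. Moreover $q \geq 1$ again because $k \geq d$, and therefore $q(d-1) \geq 1$ as required.

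There is no real obstacle: the only thing one has to be slightly careful about is confirming that $q \geq 1$ throughout (which uses only $d \leq k-1$) and that the edge case $d=1$ falls into the divisibility branch. Reassembling the two cases yields $\floor{k/(k-\ell)}(k-\ell) \geq \ceil{k/(k-\ell)}$, which is exactly the claimed observation.
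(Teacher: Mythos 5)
Your proof is correct and takes essentially the same elementary approach as the paper: a short arithmetic case analysis relying on $\floor{k/(k-\ell)} \geq 1$. The only cosmetic difference is the case split — you branch on whether $k-\ell$ divides $k$, while the paper branches on whether $\ell = k-1$ (and, in the remaining case, uses $\ceil{k/(k-\ell)} \leq \floor{k/(k-\ell)} + 1$ directly rather than splitting the ceiling) — but the underlying computation is the same.
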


To see this, observe that if $\ell = k-1$ then both sides are equal to $k$, whilst otherwise we have $k-\ell \geq 2$ and $\floor{\frac{k}{k-\ell}} \geq 1$ so $\ctmod{k}{\ell} \geq 2 \floor{\frac{k}{k-\ell}} \geq \floor{\frac{k}{k-\ell}} + 1 \geq \ceilkl$. 
\medskip

For $1 \leq \ell \leq k-1$ a $k$-uniform $\ell$-path is a $k$-graph $P$ with no isolated vertices whose vertices are ordered linearly so that every edge consists of $k$ consecutive vertices and adjacent edges intersect in precisely $\ell$ vertices (so the difference between an $\ell$-path and an $\ell$-cycle is whether the order of the vertices is linear or cyclical). Let $P$ be an $\ell$-path with vertices $v_1, \dots, v_n$, numbered in the order they appear in $P$. Observe that the edges of $P$ must then be the sets $v_{(i-1)(k-\ell) + 1}, \dots, v_{(i-1)(k-\ell)+k}$ for each $i \geq 1$ with $(i-1)(k-\ell)+k \leq n$. Since $v_n$ is not an isolated vertex it follows that $n = (m-1)(k-\ell)+k = m(k-\ell) +\ell$ for some $m \geq 1$, and that~$P$ has $m$ edges; we call $m$ the \emph{length} of $P$. Similarly, if $P$ is an $\ell$-cycle with vertices $v_1, \dots, v_n$, numbered in the order they appear in $P$, and $\{v_1, \dots, v_k\}$ is an edge of $C$, then the edges of~$C$ are precisely the sets $v_{(i-1)(k-\ell) + 1}, \dots, v_{(i-1)(k-\ell)+k}$ for each $i  \in [n/(k-\ell)]$, with subscripts taken modulo $n$. So we must have that $k-\ell$ divides $n$, and that $C$ has $n/(k-\ell)$ edges; we call this the \emph{length} of $C$. 

Let $P$ be an $\ell$-path with vertices $v_1, \dots, v_n$, numbered as they appear in $P$. The \emph{ends} of $P$ are the ordered $\ell$-tuples $(v_1, v_2, \dots, v_\ell)$ and $(v_{n-\ell+1}, v_{n-\ell+2}, \dots, v_n)$. Our arguments frequently make use of the fact that if $P$ is an $\ell$-path with ends $a$ and $b$, and $Q$ is an $\ell$-path with ends~$b$ and~$c$ which has no vertices in common with $P$ outside $b$, then there is an $\ell$-path $PQ$ with vertex set $V(P) \cup V(Q)$ and edge set $E(P) \cup E(Q)$ which can be formed by concatenating the ordered sequences of vertices of $P$ and $Q$ (only including the vertices of $b$ once). In a similar manner we can merge larger collections of $\ell$-paths which share ends appropriately into a single long $\ell$-path or $\ell$-cycle.

We now establish further key properties of $k$-graph $\ell$-cycles and $\ell$-paths.

\begin{proposition} \label{claimconstruct}
    If $C$ is a $k$-uniform $\ell$-cycle or $\ell$-path on $n$ vertices, then there exists a partition $\mathcal{P}$ of $V(C)$ into sets each of size $\ctmod{k}{\ell}$, and possibly one smaller set, such that for each $S \in \mathcal{P}$ there is an edge $e \in E(C)$ with $S \subseteq e$.
\end{proposition}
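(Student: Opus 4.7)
The plan is to partition $V(C)$ into consecutive blocks of $t := \ctmod{k}{\ell}$ vertices along the natural order of $C$, leaving at most one strictly smaller block at the end. Write $d := k-\ell$, $m := \floor{k/d}$, and $r := k - md \in \{0, 1, \dots, d-1\}$, so $t = md$. Order the vertices as $v_1, \dots, v_n$ in the order they appear in $C$, so that the edges of $C$ are $e_i = \{v_{(i-1)d + 1}, \dots, v_{(i-1)d + k}\}$ for $i \in [L]$, where $L$ is the length of $C$ and indices are read modulo $n$ if $C$ is a cycle. Set $J := \floor{n/t}$ and define $S_j := \{v_{(j-1)t + 1}, \dots, v_{jt}\}$ for each $j \in [J]$, together with the leftover $S_{J+1} := \{v_{Jt+1}, \dots, v_n\}$ whenever $Jt < n$.

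The core observation is that $e_{(j-1)m+1}$ spans vertex indices $(j-1)md + 1$ through $(j-1)md + k = jmd + r$, and hence contains $S_j$ as soon as this edge actually exists. Writing $L = qm + s$ with $0 \leq s < m$, and using $\ell = (m-1)d + r$ together with $n = Ld + \ell$ (in the path case) or $n = Ld$ (in the cycle case), a short calculation shows that $J = q$ unless $C$ is a path and $s \geq 1$, in which case $J = q+1$. In every scenario this gives $(J-1)m + 1 \leq L$, so $e_{(j-1)m+1}$ is a genuine edge of $C$ for each $j \in [J]$ (with wraparound allowed in the cycle case).

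It remains to check that the leftover $S_{J+1}$, whenever it is nonempty, lies inside some edge of $C$; the bound $|S_{J+1}| = n - Jt < t$ follows immediately from the definition of $J$. For a cycle with $s \geq 1$, the leftover consists of the first $sd < md \leq k$ vertices of $e_{qm+1}$, which is an edge by the previous paragraph. For a path one instead uses the last edge $e_L$, whose index range $[(L-1)d+1, n]$ can be checked to contain the indices of $S_{J+1}$ in both subcases $s = 0$ (where $|S_{J+1}| = \ell$) and $s \geq 1$ (where $|S_{J+1}| = (s-1)d + r$) by direct substitution of $L = qm + s$. The main obstacle is simply keeping the two modular quantities $r = k \bmod d$ and $s = L \bmod m$ straight across the path/cycle split and the two subcases; no new ideas are required.
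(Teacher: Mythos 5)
Your proof is correct and takes essentially the same approach as the paper's: partition $V(C)$ into consecutive blocks of size $\ctmod{k}{\ell}$ along the cyclic or linear vertex order and exhibit an edge of $C$ containing each block. Your treatment of the leftover block is in fact slightly more careful than the paper's one-line justification, which asserts that each block begins at the first vertex of an edge --- a statement that does not literally hold for the final short block in the path case, although that block is still contained in the last edge $e_L$, exactly as your casework verifies.
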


\begin{proof}
    Enumerate the vertices of $C$ as $v_1,\dots, v_n$, in order as they appear in $C$, in such a way that $\{v_1,\dots, v_k\}$ is an edge of $C$.
    For each $1\leq i \leq \lceil{\frac{n}{\ctmod{k}{\ell}}}\rceil$ set
    $$V_i := \left\{v_{(i-1)\ctmod{k}{\ell}+1}, \dots, v_{i\ctmod{k}{\ell}}\right\},$$
    ignoring vertices which do not exist (because they are $v_j$ for some $j > n$). 
    So $|V_i| = \ctmod{k}{\ell}$ for each $i < \frac{n}{\ctmod{k}{\ell}}$, and $|V_i| \leq \ctmod{k}{\ell}$ for $i = \lceil\frac{n}{\ctmod{k}{\ell}}\rceil$. 
    Observe that the sets~$V_i$ partition $V(C)$. So it remains only to show that for each $V_i$ there is an edge $e \in E(C)$ with $V_i \subseteq e$, which holds since for each $1 \leq i \leq \lceil\frac{n}{\ctmod{k}{\ell}}\rceil$ the vertex $v_{(i-1)\ctmod{k}{\ell}+1}$ is the first vertex of an edge of $C$. 
\end{proof}

Recall that a \emph{strong independent set} in a $k$-graph $H$ is a set $S \subseteq V(H)$ such that every edge of $H$ has at most one vertex in $S$. This is a dual notion to a \emph{vertex cover} of $H$, which is a set $S \subseteq V(H)$ for which every edge of $H$ has at least one vertex in $S$.

\begin{proposition} \label{indepsetsinpath}
    Let $P$ be a $k$-uniform $\ell$-path with $n$ vertices. The maximum size of a strong independent set in $P$ is $\ceil{n/\floor{\frac{k}{k-\ell}}(k-\ell)}$. Moreover $P$ has a strong independent set $X$ of this size which is also a vertex cover of $P$. 
\end{proposition}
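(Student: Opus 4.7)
I would prove the upper bound and the matching construction separately, with the constructed set simultaneously serving as a strong independent set and as a vertex cover of $P$. Write $d := k - \ell$ and $q := \floor{k/d}$, so that $\floorkl = qd$.

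For the upper bound, apply Proposition~\ref{claimconstruct} to $P$, obtaining a partition $\mathcal{P}$ of $V(P)$ into $\ceil{n/qd}$ sets, each of size at most $qd$ and contained in some edge of $P$. Any strong independent set meets each edge in at most one vertex, and hence meets each set of $\mathcal{P}$ in at most one vertex, so its size is at most $\ceil{n/qd}$.

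For the construction, enumerate $V(P) = \{v_1, \dots, v_n\}$ in the order they appear along $P$ and take
\[
X_c := \{v_{c + j qd} : j \geq 0,\ c + j qd \leq n\}
\]
for a suitable offset $c \in \{1, \dots, qd\}$. Non-consecutive picks in $X_c$ lie at distance at least $2qd$, which exceeds $k$ (a brief case-check separating $d > k/2$ from $d \leq k/2$ suffices), so they cannot share an edge. Two consecutive picks $v_a$ and $v_{a + qd}$ lie in a common edge $e_i$ precisely when $(i-1)d$ belongs to the interval $[a + qd - k,\ a - 1]$, which contains only $k - qd < d$ integers; hence by choosing $c$ with $c \bmod d \notin \{1, \dots, k - qd\}$, no non-negative multiple of $d$ falls into this interval, making $X_c$ a strong independent set. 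Each edge of $P$ spans $k \geq qd$ consecutive positions, matching or exceeding the spacing between picks, so every edge contains a pick: the first because $v_c \in e_1$ (as $c \leq qd \leq k$), the last because the final pick lies in the rightmost $qd \leq k$ positions of $P$, and every intermediate edge because the gap $qd$ between consecutive picks is at most the edge's width. Thus $X_c$ is also a vertex cover.

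The main obstacle is reconciling the residue condition $c \bmod d \notin \{1, \dots, k - qd\}$ needed for strong independence with the requirement that $c$ be small enough for $|X_c|$ to attain the maximum $\ceil{n/qd}$. When $d \mid k$ the residue constraint is vacuous and any $c \in \{1, \dots, d\}$ works, while for $d \nmid k$ a careful choice of $c$, together with some boundary bookkeeping around the possibly-smaller final block of $\mathcal{P}$, is needed to ensure that a compatible residue class contains an offset small enough to maximise $|X_c|$.
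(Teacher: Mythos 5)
Your approach tracks the paper's closely: both prove the upper bound via Proposition~\ref{claimconstruct}, and both construct the set by placing one pick per block of $\floorkl$ consecutive vertices. The difference is that you allow a variable offset $c$ and then run into a tension you leave unresolved, whereas the paper simply fixes $c = k-\ell$. Note that $(k-\ell) \bmod (k-\ell) = 0$, which your own residue analysis shows lies outside the forbidden set $\{1,\dots,k-\floorkl\}$; so this one choice satisfies the strong-independence condition in \emph{every} case, and the case split you envision between $(k-\ell)\mid k$ and $(k-\ell)\nmid k$ is unnecessary. The sentence \emph{``for $d \nmid k$ a careful choice of $c$ \dots is needed''} is the genuine gap: the proof ends on an unresolved promise rather than an actual choice.

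That said, your instinct that something is delicate here is well-founded. The paper's $X$ uses indices up to $k-\ell + \bigl(\ceil{n/\floorkl} - 1\bigr)\floorkl$, which exceeds $n$ whenever $n \bmod \floorkl$ is a positive residue smaller than $k-\ell$; in that regime the constructed set has only $\ceil{n/\floorkl}-1$ vertices. A concrete instance is $k=5$, $\ell = 2$, $n=11$: the $\ell$-path has $m=3$ edges, so no strong independent set can exceed size $3$, yet $\ceil{11/3} = 4$. So the obstruction you encountered cannot be removed by a cleverer $c$; it reflects an off-by-one in the stated bound when $(k-\ell) \nmid k$ and $n \bmod \floorkl < k-\ell$. (Where the proposition is actually used, in Proposition~\ref{unbalancedpath}, a $\pm$ slack of a few units is already built into the conclusion, so the downstream argument is unaffected, but you should not expect to be able to complete your proof exactly as stated.)
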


\begin{proof}
By Proposition~1.4 the vertices of $P$ may be partitioned into sets of size $\floor{\frac{k}{k-\ell}}(k-\ell)$ and possibly one smaller set in such a way that each set is a subset of an edge of $P$. The number of sets in this partition is then $\ceil{n/\floor{\frac{k}{k-\ell}}(k-\ell)\rfloor}$. Since every strong independent set in $P$ has at most one vertex from each set, this gives the desired bound.

Now let $v_1, \dots, v_n$ be the vertices of $P$, ordered as they appear in $P$. Define
$$X := \left\{v_{k-\ell+ (i-1) \floor{\frac{k}{k-\ell}}(k-\ell)}: 1 \leq i \leq \left\lceil{\frac{n}{\floor{\frac{k}{k-\ell}}(k-\ell)}}\right\rceil\right\}.$$
The set $X$ then has size $\ceil{n/\floor{\frac{k}{k-\ell}}(k-\ell)}$. Moreover $X$ is a strong independent set in $P$, and every edge of $P$ contains a vertex of $X$. 
\end{proof}

It is straightforward to see that every $\ell$-path $P$ is $k$-partite: if $v_1, \dots, v_n$ are the vertices of $P$ in order as they appear in $P$, then taking $V_i := \{v_j : j \in [n], j \equiv i \pmod{k}\}$ gives valid vertex classes for $P$. However, these vertex classes all have approximately the same size. Our final preliminary result describes how we may choose vertex classes for $P$ in a more unbalanced manner. Specifically, we want one class to be essentially as large as possible.  

\begin{proposition} \label{unbalancedpath}
Let $P$ be a $k$-uniform $\ell$-path with $n$ vertices. Let $w_1 = k-1$ and $w_2=\dots=w_k=\ctmod{k}{\ell} - 1$, and set $W = \sum_{i \in [k]} w_i$. The vertex set $V(P)$ may be partitioned into sets $U_1, \dots, U_k$ so that 
\begin{enumerate}[label=(\alph*), noitemsep]
    \item for each $i \in [k]$ every edge of $P$ has precisely one vertex in $U_i$, and 
    \item for each $i \in [k]$ we have $|U_i| = \frac{w_i}{W}n \pm 2$.
\end{enumerate}
\end{proposition}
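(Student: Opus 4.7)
The plan is to exhibit the partition $U_1, \dots, U_k$ via an explicit deterministic colouring of the vertices. Index the vertices as $v_1, \dots, v_n$ in the order they appear in $P$, and for each $j \in [n]$ write $j - 1 = a \ctmod{k}{\ell} + b$ with $0 \leq b < \ctmod{k}{\ell}$. Define
\[ c(v_j) = \begin{cases} 1 & \text{if } b = \ctmod{k}{\ell} - 1, \\ 2 + \bigl( a(\ctmod{k}{\ell} - 1) + b \bigr) \bmod (k-1) & \text{otherwise,} \end{cases} \]
and set $U_i := \{v_j : c(v_j) = i\}$. Note that $U_1$ consists of the vertices at positions $\ctmod{k}{\ell}, 2\ctmod{k}{\ell}, \ldots$, which is (up to a shift of indexing) the maximum strong-independent-set-and-vertex-cover produced in Proposition~\ref{indepsetsinpath}.

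The main step is to verify property (a). Fix an edge $e_t = \{v_{s+1}, \dots, v_{s+k}\}$ with $s = (t-1)(k-\ell)$ and set $\sigma = s \bmod \ctmod{k}{\ell}$. By Proposition~\ref{indepsetsinpath}, exactly one vertex of $e_t$ lies in $U_1$ (namely the one at the unique multiple of $\ctmod{k}{\ell}$ in $\{s+1, \dots, s+k\}$), and because one can check $\ctmod{k}{\ell} > k/2$ the edge meets at most two consecutive blocks of $\ctmod{k}{\ell}$ positions. A short case analysis on whether $\sigma = 0$ or $\sigma > 0$ shows that when the $k-1$ non-colour-$1$ vertices of $e_t$ are listed in the order they appear along $P$, their ``arguments'' $a(\ctmod{k}{\ell} - 1) + b$ form a run of $k-1$ consecutive integers: the increment of $\ctmod{k}{\ell} - 1$ in $a(\ctmod{k}{\ell} - 1)$ when $a$ advances between blocks precisely bridges the gap in $b$ caused by skipping the colour-$1$ position $b = \ctmod{k}{\ell} - 1$. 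Since $k-1$ consecutive integers are pairwise distinct modulo $k-1$, each of the colours $2, \dots, k$ appears in $e_t$ exactly once.

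For property (b), observe that the colouring is periodic with period dividing $W = (k-1)\ctmod{k}{\ell}$: when $j$ is shifted by $W$, the quantity $a$ advances by $k-1$, so $a(\ctmod{k}{\ell} - 1)$ changes by a multiple of $k-1$ and the formula is unchanged. Within any $W$ consecutive positions there are exactly $w_1 = k - 1$ multiples of $\ctmod{k}{\ell}$, and by the cyclic symmetry in $i \in \{2, \dots, k\}$ each remaining colour appears exactly $w_i = \ctmod{k}{\ell} - 1$ times. Writing $n = mW + s$ with $0 \leq s < W$ and counting the colour-$i$ positions in the first $s$ vertices of a period (which are quasi-arithmetic, with consecutive gaps differing by at most one) yields $|U_i| = \frac{w_i}{W} n \pm 2$.

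The main obstacle is the consecutive-integers claim used in the verification of (a): one must carefully track the block-level structure, using that $a$ can increment at most once inside an edge, and then confirm that the offset $\ctmod{k}{\ell} - 1$ in the formula exactly compensates for the removal of the $b = \ctmod{k}{\ell} - 1$ slot. Once this algebraic identity is in place, the rest of the argument is elementary counting.
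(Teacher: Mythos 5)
Your construction is correct and essentially the same as the paper's: $U_1$ is a strong-independent-set-and-vertex-cover of $P$ (you use offset $\ctmod{k}{\ell}$ rather than the offset $k-\ell$ of Proposition~\ref{indepsetsinpath}; either multiple-of-$(k-\ell)$ offset within the first edge works, though this does require the short check you gesture at, since shifts of a vertex cover of a \emph{path} are not automatically vertex covers), and $U_2,\dots,U_k$ distribute the remaining vertices cyclically modulo $k-1$ in path order. The paper sidesteps your ``main obstacle'' by a change of viewpoint: rather than expressing the colour of $v_j$ as a closed formula in $j$, it re-indexes the vertices outside $U_1$ as $u_1,u_2,\dots$ in path order and sets $U_i=\{u_j:j\equiv i \pmod{k-1}\}$, so that an edge's $k-1$ non-$U_1$ vertices trivially have consecutive $u$-indices and hence distinct colours — which is exactly the content of your consecutive-integers algebraic identity, obtained without computation.
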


\begin{proof}
Observe that $W = (k-1) + (k-1)\left(\ctmod{k}{\ell} - 1\right) = (k-1)\ctmod{k}{\ell}$. 
Let $U_1$ be a strong independent set in $P$ of size $\ceil{n/\floor{\frac{k}{k-\ell}}(k-\ell)}$ which is also a vertex cover of $P$, which exists by Proposition~\ref{indepsetsinpath}. Observe that we then have $|U_1| = \frac{w_1}{W}n \pm 1$. Let $u_1, u_2, \dots u_{n-|U_1|}$ be the remaining vertices of $P$, numbered in the order that they appear in $P$. For each $2 \leq i \leq k$ set $U_i := \{u_j : j \in [n-|U_1|], j \equiv i \pmod{k-1}\}$. Then the sets $U_1, \dots, U_k$ partition $V(P)$. Moreover, since each edge of $P$ consists of $k$ consecutive vertices of $P$, precisely one of which is in $U_1$, each edge of $P$ has precisely one vertex in each set $U_i$. Finally, for each $2 \leq i \leq k$ we have
$$ |U_i| = \frac{n-|U_1|}{k-1} \pm 1 = \frac{1}{k-1}\left(n -\frac{w_1}{W}n \pm 1\right) \pm 1 = \frac{w_i}{W} n \pm 2,$$
where the first equality uses our earlier bound on $|U_1|$ and the second holds since $w_i = \frac{W-w_1}{(k-1)}$ by definition of the $w_i$ and $W$.
\end{proof}

We will use the following consequence of Proposition~\ref{unbalancedpath}: for every real number $x \geq 0$ there is an $\ell$-path $k$-graph $P$ with vertex classes $U_1, \dots, U_k$ such that $|U_i| = w_ix \pm k$ for each $i \in [k]$. Indeed, if $Wx < k$ then we may take $P$ to be a single edge, so the sets $U_i$ each contain one vertex. On the other hand, if $Wx \geq k$ then by choosing $n$ to be the smallest integer greater than or equal to $Wx$ with $n \equiv \ell \pmod{k-\ell}$ we have $n = Wx \pm (k-\ell)$, and then for each $i \in [k]$ the path $P$ obtained from Proposition~\ref{unbalancedpath} has 
$$|U_i| = \frac{w_i n}{W} \pm 2 = w_i x \pm \left(2 + \frac{(k-\ell)w_i}{W}\right) = w_i x \pm 3,$$ 
as desired.

\subsection{Optimality of Theorem~\ref{main}} \label{sec:optimality}
Theorem~\ref{main} is best possible up to the $\alpha n$ error term, as demonstrated by the following modification of a construction given by Halfpap and Magnan~\cite{HM}, which is a hypergraph analogue of the extremal example for Dirac's theorem in $2$-graphs. 

\begin{constr}\label{construction}
    Fix $k\geq 3$ and $1\leq \ell \leq k-1$. Let $A$ and $B$ be vertex-disjoint sets with $|A| = \frac{n}{\ctmod{k}{\ell}} + 1$ and $|B| = n - |A|$, so $|A \cup B| = n$. Let $H_\mathrm{ext}$ be the $k$-graph $H$ with vertex set $A \cup B$ whose edges are all sets $e \in \binom{V(H)}{k}$ with $|e \cap A| \in \{0,1\}$.
\end{constr}

Observe that $H_\mathrm{ext}$ has minimum positive codegree 
$$\delta^+(H_\mathrm{ext}) = |B| - (k-2) = \left(\dcover\right)n - (k-1)$$ 
and has no isolated vertices. Suppose that $C$ is a Hamilton $\ell$-cycle in $H_\mathrm{ext}$. By Proposition~\ref{claimconstruct} there is a partition $\mathcal{P}$ of $V(C) = V(H)$ with at most $\lceil \frac{n}{ \ctmod{k}{\ell}}\rceil$ parts so that for each $S \in \mathcal{P}$ there is an edge $e$ of $C$ with $S \subseteq e$. Since $|A| > \lceil \frac{n}{ \ctmod{k}{\ell}}\rceil$, by averaging there must be some $S \in \mathcal{P}$ with $|S \cap A| \geq 2$. This gives a contradiction, since $S$ is contained in some edge of $C$, but every edge of $H$ has at most one vertex of $A$ and therefore cannot contain $S$. So $H_\mathrm{ext}$ does not contain a Hamilton $\ell$-cycle.

\section{Proof of Theorem~\ref{main}}

Our proof of Theorem~\ref{main} proceeds by the \emph{absorbing} method, which has seen widespread use over recent years for problems involving the embedding of spanning structures in graphs and hypergraphs. In our setting this has three key components.

The first component is a connecting lemma, which states that for any pair of ordered $\ell$-tuples $x$ and $y$ of vertices of $H$ there is a short $\ell$-path $P$ in $H$ with ends $x$ and $y$. Moreover, we can require that the interior vertices of $P$ (that is, vertices not in the ends $x$ and~$y$) avoid a small set of `forbidden' vertices.

\begin{lemma}\label{connectinglemma}
Fix $k \geq 3$, $1\leq\ell<k$ and constants with $1/n_0 \ll \alpha \ll 1/k$. Let $H$ be a $k$-graph on $n \geq n_0$ vertices with $\delta^+(H) \geq \left(\dconnect \right) n + \alpha n$, and let $X \subseteq V(H)$ be a set with $|X| \leq \alpha n$. For each pair of disjoint ordered $\ell$-tuples $x = (x_1, \dots, x_\ell)$ and $y = (y_1, \dots, y_\ell)$ of vertices in $V(H)\setminus X$ with $\deg(x), \deg(y)>0$, there exists an $\ell$-path $P$ in $H$ of length $\ceil{\frac{k}{k-\ell}}$ with ends $x$ and $y$ such that $V(P)  \cap X \subseteq x \cup y$.
\end{lemma}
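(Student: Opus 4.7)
Let $c := \ceilkl$, so the required path has $c$ edges and $c(k-\ell) + \ell$ vertices, of which the first $\ell$ are $x$, the last $\ell$ are $y$, and $m := c(k-\ell) - \ell$ are free interior vertices to be chosen. The plan is to construct the path one edge at a time by a greedy extension from $x$, applying Proposition~\ref{choosevs} at each step, and then close up at $y$ with the final edge.

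Starting from the right end $w^{(0)} := x$ (which has positive degree by hypothesis), for each $i = 1, \ldots, c-1$ I extend by picking $k-\ell$ new vertices in $V(H) \setminus (X \cup x \cup y \cup \{\text{previously used}\})$ that, together with $w^{(i-1)}$, form a new edge of $H$. Each new vertex is chosen via Proposition~\ref{choosevs}(b) applied to the currently partial edge; since the set of vertices to avoid at any sub-step has size at most $|X| + 2\ell + (c-1)(k-\ell) = \alpha n + O(k)$, which is smaller than $\delta^+(H)$ by the hypothesis $\delta^+(H) \geq \left(\dconnect\right)n + \alpha n$ for $n$ large, the application is valid. The new right end $w^{(i)}$ lies inside the freshly formed edge, so $\deg(w^{(i)}) \geq 1$, and the induction can continue.

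It remains to form the $c$-th edge, which as a set must contain $w := w^{(c-1)}$ and $y$, together with $\max(0, k - 2\ell)$ additional new vertices. In the case $2\ell \leq k$, these extra vertices may be chosen by further applications of Proposition~\ref{choosevs}, provided $\deg(w \cup y) \geq 1$. In the case $2\ell > k$, the tail of $w$ must coincide with the head of $y$; I would arrange this by choosing the corresponding new vertices in the $(c-1)$-th extension to be the required entries of $y$, admissible because those entries lie outside $X$ and the codegree slack easily accommodates the additional constraint.

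The main obstacle is verifying that the closing edge really lies in $H$: this amounts to finding common neighbors of up to $c$ relevant $(k-1)$-tuples built from $w$ and $y$. Each such neighborhood has size at least $\delta^+(H) \geq (1 - 1/c)n + \alpha n$, so by inclusion--exclusion their intersection has size at least $c\alpha n - O(1) > 0$, provided every one of these $(k-1)$-tuples has positive degree. Engineering this positivity--which requires careful bookkeeping of the sequence of tuples arising during the extension, and is most delicate in the tight case $\ell = k-1$ where $c = k$ and a single interior vertex must satisfy $k$ simultaneous edge constraints--is the main technical hurdle, and is where the connecting codegree bound $\dconnect$ (rather than the smaller bound used for covering) is essential.
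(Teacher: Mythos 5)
Your proposal takes a genuinely different route from the paper's. The paper starts from an abstract $\ell$-path $Q$ of length $\ceilkl$ with ends $a,b$, and considers the poset $\Phi$ of all injective \emph{partial} embeddings $\phi:V(Q)\to V(H)$ that send $a,b$ to $x,y$, avoid $X$ on interior vertices, and maintain the invariant that $\deg_H(\phi(e))\ge 1$ for every edge $e$ of $Q$, where $\phi(e)$ denotes the image of the already-embedded part of $e$. They take a maximal $\phi\in\Phi$ and show it is total: any unembedded $v\in V(Q)$ lies in at most $\ceilkl$ edges of $Q$, each of whose partial images has fewer than $k$ vertices and positive degree, so at most $n-\delta^+(H)<n/\ceilkl-\alpha n$ candidate images of $v$ break the invariant for a given edge; summing over the at most $\ceilkl$ relevant edges leaves more than $\alpha n$ usable vertices outside $X$. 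The essential feature of this formulation is that no order on the embedding is imposed, so there is never a ``closing-up'' step.

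Your greedy left-to-right extension commits to an order, and that is exactly where the proposal has a genuine gap -- which you flag (``Engineering this positivity \dots{} is the main technical hurdle'') but do not resolve. The problem is not bookkeeping: once $w^{(c-1)}$ has been built purely from the $x$-side, the last edge(s) of the path involve $(k-1)$-sets drawn partly from your chosen vertices and partly from the prescribed $y$, and Proposition~\ref{choosevs} yields $\geq\delta^+(H)$ candidates for the next vertex \emph{only if} each such mixed $(k-1)$-set already has positive degree. Your construction never arranges this, and it is not automatic: for $\ell=k-1$ the single interior vertex must lie in $N(x_i,\dots,x_{k-1},y_1,\dots,y_{i-1})$ for every $i\in[k]$, yet $\deg(x_i,\dots,x_{k-1},y_1,\dots,y_{i-1})>0$ is in no way implied by $\deg(x),\deg(y)>0$. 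So the final ``inclusion--exclusion'' bound you wish to invoke is arithmetically the right one, but its precondition (each neighbourhood having size $\geq\delta^+(H)$) is unverified, and the attempted ad hoc fix of planting $y$-entries during the $(c-1)$-th extension only pushes the same positivity requirement one step earlier. The proof is therefore incomplete precisely at the step you identify as the hard one; an approach that, like the paper's, defers every placement until all of a vertex's edge constraints can be met simultaneously is what is needed.
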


To prove Lemma~\ref{connectinglemma}, we extend the ends $x$ and $y$ to the path $P$ by choosing the remaining vertices one by one. Since each vertex in an $\ell$-path lies in the intersection of at most $\ceil{\frac{k}{k-\ell}}$ edges of the path, our minimum positive codegree condition suffices to do this provided we make sure we never need to extend a set with degree zero; each of our individual vertex choices are made with this in mind.

\begin{proof}
Let $Q$ be a $k$-uniform $\ell$-path with length $\ceil{\frac{k}{k-\ell}}$ and with ends $a = (a_1, \dots, a_\ell)$ and $b = (b_1, \dots, b_\ell)$. Observe that
\begin{enumerate}[noitemsep, label=(\roman*)]
    \item no edge of $Q$ intersects both $a$ and $b$, and
    \item every vertex of $Q$ is contained in at most $\ceil{\frac{k}{k-\ell}}$ edges of $Q$.
\end{enumerate}

For a partial function $\phi : V(Q) \to V(H)$ we write 
\begin{equation*}\phi(S) := \{\phi(x): x \in S \mbox{ and $\phi(x)$ is defined}\}
\end{equation*}
for the image of a set $S \subseteq V(Q)$ under $\phi$. In other words, vertices in $S$ with no defined image do not contribute to the image of $S$ under $\phi$. 

Let $\Phi$ be the set of all injective partial functions $\phi: V(Q) \to V(H)$ such that
\begin{enumerate}[noitemsep, label=(\alph*)]
    \item for each $i \in [\ell]$ we have $\phi(a_i) = x_i$ and $\phi(b_i) = y_i$, 
    \item for each $u \in V(Q) \setminus (a \cup b)$ for which $\phi(u)$ is defined we have $\phi(u) \notin X$, and
    \item for every $e \in E(Q)$ we have $\deg_H(\phi(e)) \geq 1$.
\end{enumerate}
The assumptions of our lemma, together with~(i), imply that the partial function with $\phi(a_i) = x_i$ and $\phi(b_i) = y_i$ for each $i \in [\ell]$, and $\phi(u)$ undefined for every other $u \in V(Q)$, satisfies ~(a),~(b) and~(c). So the set $\Phi$ is non-empty. It follows that $\Phi$ contains an element which is maximal with respect to inclusion; fix the partial function $\phi$ to be such an element.

Suppose for a contradiction that $\phi(v)$ is undefined for some $v \in V(Q)$, and let $e_1, \dots, e_q$ be the edges of $Q$ which contain $v$. 
For each $i \in [q]$ we have $|\phi(e_i)| \leq k-1$ because $\phi(v)$ is undefined, and $\deg_H(\phi(e_i)) \geq 1$ by~(c). So by Proposition~\ref{choosevs} there are at most $n - \delta(H) < n/\ceil{\frac{k}{k-\ell}} - \alpha n$ vertices $z \in V(H)$ for which we do not have $\deg_H(e_i \cup \{z\}) \geq 1$. It follows that at least 
$$|V(H) \setminus X| - q \left(\frac{n}{\ceil{\frac{k}{k-\ell}}} - \alpha n\right) > n - |X| - \ceil{\frac{k}{k-\ell}} \cdot \frac{n}{\ceil{\frac{k}{k-\ell}}} + 2\alpha n \geq \alpha n$$
vertices $z \in V(H) \setminus X$ have $\deg_H(e_i \cup \{z\}) \geq 1$ for every $i \in [q]$. Indeed, for the first inequality note that if $q \geq 2$ the calculation is straightforward using (ii), whilst if $q=1$ we obtain the desired bound with plenty of room to spare since $\ceil{\frac{k}{k-\ell}} \geq 2$. Since $|V(Q)| < \alpha n$, it follows that we may fix a vertex $z \in V(H) \setminus (X \cup \phi(V(Q)))$ with $\deg_H(e_i \cup \{z\}) \geq 1$ for every $i \in [q]$. Let $\phi': V(Q) \to V(H)$ be the partial function with $\phi'(u) = u$ for every $u \in V(Q)$ for which $\phi(u)$ is defined and also $\phi'(v) = z$. We then have $\phi' \in \Phi$, contradicting the maximality of $\phi \in \Phi$. 

We conclude that in fact $\phi(v)$ is defined for every $v \in V(Q)$, so $\phi$ is an injective (non-partial) function $\phi: V(Q) \to V(H)$ satisfying~(a),~(b) and~(c). Since a set $S \in \binom{V(H)}{k}$ has $\deg_H(S) = 1$ if $S \in E(H)$ and $\deg_H(S) = 0$ otherwise,~(c) implies that $\phi$ maps $Q$ to a copy $P$ of $Q$ in $H$,~(a) implies that the ends of $P$ are $x$ and $y$ and~(b) implies that $V(P) \cap X \subseteq (x \cup y)$. So $P$ is the desired $\ell$-path in $H$.
\end{proof}

The second component of the proof of Theorem~\ref{main} is an absorbing lemma, which asserts the existence of a short $\ell$-path $P$ in $H$ which can absorb any small set $S$ of vertices outside $P$, meaning that there exists an $\ell$-path~$P'$ with the same ends as $P$ and with vertex set $S \cup V(P)$  (assuming a necessary divisibility condition on $|S|$).

\begin{lemma}\label{absorbinglemma}
Fix $k \geq 3$, $1\leq\ell\leq k-1$ and constants with $1/n_0 \ll \beta \ll \alpha, 1/k$. If $H$ is a $k$-graph on $n \geq n_0$ vertices with $\delta^+(H) \geq \left(\dconnect \right) n + \alpha n$ and no isolated vertices, then there exists a set $A \subseteq V(H)$ with $|A| \leq \beta n$ and distinct ordered $\ell$-sets $x,y$ with the property that for every set $S \subseteq V(H) \setminus A$ for which $|S| \leq \beta^2 n$ and $k-\ell$ divides $|S|$ there is an $\ell$-path in~$H$ with vertex set $A \cup S$ and with ends $x$ and $y$. 
\end{lemma}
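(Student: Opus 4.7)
The plan is to use the absorbing method: construct a large family of short ``absorbing gadgets,'' randomly sample a reservoir, chain them into a single $\ell$-path via Lemma~\ref{connectinglemma}, and verify that the resulting path absorbs any valid small set $S$. Call a bounded-length $\ell$-path $F$ in $H$ an \emph{absorber for the ordered $(k-\ell)$-tuple $T$} if $V(F) \cap T = \emptyset$ and there exists an $\ell$-path $F^+$ in $H$ with $V(F^+) = V(F) \cup T$ having the same ordered ends as $F$. I will fix an absorber template (a combinatorial scheme specifying the positions of $T$ within $F^+$'s vertex sequence) designed so that no single edge of $F^+$ contains more than $\ell$ vertices of $T$, so that the ``free'' vertices of $V(F)$ can be filled in one at a time to satisfy all edge conditions of both $F$ and $F^+$. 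Using Proposition~\ref{choosevs}(a) iteratively, each free vertex has at least $\delta^+(H) - O(k) \geq \alpha n/2$ valid options, which shows that every tuple $T$ admits at least $\gamma n^L$ absorbers, where $L = |V(F)|$ is the number of free vertices of the template and $\gamma = \gamma(\alpha, k) > 0$.

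Next, select each potential absorbing $\ell$-path $F$ independently with probability $p := \beta^2/(\gamma n^{L-1})$. By Theorem~\ref{chernoff} combined with a union bound over the at most $n^{k-\ell}$ ordered tuples $T$, with positive probability the random family $\mathcal{F}_0$ satisfies: (i) $|V(\mathcal{F}_0)| \leq L|\mathcal{F}_0| \leq 2L\beta^2 n/\gamma \ll \beta n$; (ii) for every $T$, $\mathcal{F}_0$ contains at least $\beta^2 n/2$ absorbers for $T$; and (iii) at most $\beta^3 n$ pairs of absorbers in $\mathcal{F}_0$ share a vertex. Remove one absorber from each bad pair to obtain a subfamily $\mathcal{F}$ of pairwise vertex-disjoint absorbers still containing at least $\beta^2 n / 4$ absorbers for each tuple $T$.

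Now pick two disjoint ordered $\ell$-tuples $x, y$ of positive-degree vertices disjoint from $V(\mathcal{F})$, enumerate the absorbers of $\mathcal{F}$ as $F_1, \dots, F_t$ with ends $(p_i, q_i)$, and apply Lemma~\ref{connectinglemma} in turn to connect $x$ to $p_1$, each $q_i$ to $p_{i+1}$, and $q_t$ to $y$; at every step the forbidden set (the vertices already used) has size at most $\beta n /2$, so the lemma applies. Let $P$ be the resulting $\ell$-path and set $A := V(P)$, so $|A| \leq \beta n$. Given any $S \subseteq V(H) \setminus A$ with $|S| \leq \beta^2 n$ and $(k-\ell) \mid |S|$, partition $S$ arbitrarily into ordered $(k-\ell)$-tuples $T_1, \dots, T_r$ with $r \leq \beta^2 n /(k-\ell)$; greedily assign each $T_i$ to an unused $T_i$-absorber $F_{j_i}$ in $\mathcal{F}$, which succeeds since each tuple has at least $\beta^2 n /4 > r$ absorbers available. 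Replacing each chosen $F_{j_i}$ in $P$ by the corresponding $F_{j_i}^+$ yields an $\ell$-path in $H$ with vertex set $A \cup S$ and ends $x, y$, as required.

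The principal obstacle is the design of the absorber template. A naive design placing all of $T$ inside a single edge of $F^+$ would require $\deg(T) \geq 1$, which the minimum positive codegree hypothesis does not guarantee: some $(k-\ell)$-tuples may have zero degree. The remedy is to spread the vertices of $T$ across multiple edges of $F^+$ with at most $\ell$ per edge, so that every edge condition of $F$ and $F^+$ can be discharged by choosing a single new free vertex from a set of size $\Omega(n)$ via Proposition~\ref{choosevs}(a). Verifying that such a template exists for every $(k, \ell)$ with $1 \leq \ell \leq k-1$, and that the greedy construction produces $\Omega(n^L)$ absorbers per tuple, is the main combinatorial content of the proof.
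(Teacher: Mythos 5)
Your overall architecture is the same as the paper's: define absorbers for $(k-\ell)$-tuples, show each tuple has $\Omega(n^L)$ absorbers, randomly sample and prune to a linear-sized, pairwise-disjoint family with many absorbers per tuple, chain the family into a single $\ell$-path via Lemma~\ref{connectinglemma}, and then swap absorbers for their extended versions. The random sampling and chaining steps in your write-up are correct in outline and essentially match the paper.

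However, there is a genuine gap, and you yourself flag it: you defer the construction of the absorber template entirely, saying only that one should ``spread the vertices of $T$ across multiple edges of $F^+$ with at most $\ell$ per edge'' and that verifying such a template exists ``is the main combinatorial content of the proof.'' That content is precisely what is missing. The paper actually carries this out via an explicit gadget on $t := (2k-1)(k-\ell)+\ell$ free vertices, in which the $k-\ell$ vertices of $T$ are inserted at positions $2kj$ for $j\in[k-\ell]$; with this spacing every $k$-window of the extended sequence contains \emph{at most one} vertex of $T$ (not merely at most $\ell$), and the remaining vertices are chosen one at a time in a carefully specified order (outward from the middle, alternating between two positive-degree constraints per step) so that at each step the candidate vertex lies in the intersection of two sets of size at least $\delta^+(H)$; this is what makes greedy construction work using only the positive-codegree hypothesis. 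Without spelling out a concrete template and the accompanying sequence of degree conditions — showing that every intermediate set one must extend indeed has positive degree — the claim that every tuple $T$ admits $\Omega(n^L)$ absorbers is unsupported. Your heuristic ``at most $\ell$ per edge'' is also not by itself sufficient: even with at most $\ell$ vertices of $T$ per edge, one must verify that the embedding order can always be arranged so that the set being extended at each step has positive degree, and this is exactly where the minimum positive codegree (as opposed to minimum codegree) hypothesis requires care. This is the crux of Lemma~\ref{absorbinglemma} and it needs to be proved, not asserted.
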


We prove Lemma~\ref{absorbinglemma} by using a randomised selection to obtain short path sections with a weaker absorbing property, then connecting these sections to obtain the desired absorbing path~$P$ (with vertex set $A$).

\begin{proof}
Let $t :=  (2k-1)\cdot(k-\ell) + \ell$. We say an ordered $t$-tuple $(v_1,\dots, v_t)$ of vertices of $H$ \textit{absorbs} a set $T \in \binom{V(H)}{k-\ell}$ if 
\begin{enumerate}[label=(\roman*), noitemsep]
    \item $v_1, \dots, v_t$ in that order, form an $\ell$-path $Q$ in $H$, and
    \item there is an $\ell$-path in $H$ with vertex set $\{v_1,\dots, v_t\}\cup T$ and with the same ends $(v_1, \dots, v_\ell)$ and $(v_{t-\ell+1}, \dots, v_t)$ as $Q$.
\end{enumerate}

We start by giving a lower bound on how many absorbing sequences are there for a set of $k-\ell$ vertices. For each $T \in \binom{V(H)}{k-\ell}$, let $\mathcal{A}(T)$ denote the set of ordered $t$-tuples of vertices of $H$ which absorb $T$.%, and let $\gamma := \alpha^{t-k}/2^k$.

\begin{claim}\label{claimabs}
    For each $T \in \binom{V(H)}{k-\ell}$ we have $\mathcal{A}(T) \geq \alpha^t n^{t}$.
\end{claim}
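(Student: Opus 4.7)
The plan is to specify a template for absorbing configurations and count realisations by greedy vertex selection, mirroring the inductive strategy used in the proof of Lemma~\ref{connectinglemma}. Let $T=\{w_1,\dots,w_{k-\ell}\}$. Fix insertion positions $\ell<p_1<\cdots<p_{k-\ell}\leq t+k-2\ell$ in the $u$-sequence (of length $t+k-\ell$), spread so that any two $p_j$'s are at least $k$ apart; this is feasible since the middle portion has length roughly $2k(k-\ell)-\ell$. For a candidate ordered tuple $(v_1,\dots,v_t)$, form the augmented sequence $(u_1,\dots,u_{t+k-\ell})$ by placing $w_j$ at position $p_j$ and the $v_i$'s in order at the remaining $t$ positions. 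The template requires $(v_1,\dots,v_t)$ to realise an $\ell$-path $Q$ of length $2k-1$ in $H$, and the augmented sequence to realise an $\ell$-path $Q'$ of length $2k$ in $H$; since $Q$ and $Q'$ share the endpoints $(v_1,\dots,v_\ell)$ and $(v_{t-\ell+1},\dots,v_t)$ and $V(Q')=V(Q)\cup T$, any realisation of the template is an absorbing sequence for $T$.

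To count realisations, I would select $v_1,\dots,v_t$ one at a time in the order they appear in the augmented sequence, maintaining the invariant that every partial image of every edge of $Q\cup Q'$ has positive codegree in $H$ -- directly analogous to condition~(c) in the proof of Lemma~\ref{connectinglemma}. The key structural claim is that, with this spread placement of $T$ and the $u$-ordering, each $v_i$ picked is the \emph{last} picked vertex of at most $\ceilkl$ edges of $Q\cup Q'$ (counted across both paths). Applying Proposition~\ref{choosevs}(a) once per completing edge, the number of bad choices for $v_i$ is at most
\[
\ceilkl\bigl(n-\delta^+(H)\bigr)\;\leq\; n-\alpha n\,\ceilkl,
\]
leaving at least $\alpha n\,\ceilkl - t \geq \alpha n$ valid choices for $v_i$ at each step, for $n$ sufficiently large. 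Multiplying over the $t$ steps yields $|\mathcal{A}(T)| \geq (\alpha n)^t = \alpha^t n^t$, as required.

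The main obstacle is verifying the structural claim: that, with insertion positions spread by at least $k$, each $v_i$ is the last picked vertex of at most $\ceilkl$ edges of $Q\cup Q'$. The difficulty is local to each insertion position $p_j$, where both the $Q$-edges (which skip over $w_j$) and the $Q'$-edges (which include $w_j$) involve the nearby $v_i$'s, and one must verify that these two families do not combine to exceed the budget at any single $v_i$; the spread spacing localises the analysis to one insertion at a time and a short case analysis suffices. A secondary technical point is initialising the positive-codegree invariant on partial edges containing the earliest $w_j$'s, which is handled by first fixing any edge of $H$ extending $\{w_1\}$ (possible since $H$ has no isolated vertices) and selecting the initial $v_i$'s via repeated application of Proposition~\ref{choosevs}(a); these initial steps each contribute $\Omega(n)$ choices and so do not affect the final bound.
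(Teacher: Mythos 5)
The general strategy---define a template where both the $t$-sequence and the augmented $(t+k-\ell)$-sequence realise paths in $H$, then count realisations by greedy vertex selection---matches the paper. But the specific choices differ in ways that open a real gap.

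The key point of departure is that you work directly with $\ell$-paths (requiring $Q$ and $Q'$ to be $\ell$-paths of length $2k-1$ and $2k$) with insertion positions spread $\geq k$ apart and selection in $u$-order, whereas the paper inserts the vertices of $T$ at positions spaced $2k$ apart, builds outward from the first $T$-vertex, and constrains all choices so that the vertices form \emph{tight} paths, extracting the $\ell$-paths only at the end by deleting edges. Those three differences are not cosmetic. First, your ``key structural claim'' is not the correct count of constraints. In a left-to-right greedy maintaining positive codegree of partial images, the relevant number at each step is not the number of edges for which $v_i$ is the last picked vertex; it is the number of non-nested constraint families, and these include (a) keeping positive codegree of the current growing $Q$-partial-edge, (b) keeping positive codegree of the current growing $Q'$-partial-edge \emph{including the next $w_j$}, since $w_j$ is a fixed vertex whose membership in upcoming edges cannot be adjusted later, and (c) completing any edge whose last $u$-position is the one being filled. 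The anticipation constraint (b) is not captured by ``is the last picked vertex'', and it must start at the very first position of the $Q'$-edge that will contain $w_j$ (otherwise one cannot apply Proposition~\ref{choosevs}(a) inductively); you address this only for $w_1$.

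Second, spacing $\geq k$ is not enough. With $p_{j+1} - p_j = k$, the ``exit'' zone following $p_j$ (positions $p_j+1,\dots,p_j+k-1$, where the $Q'$-partial edge still contains $w_j$) and the ``anticipation'' zone preceding $p_{j+1}$ (positions $p_{j+1}-k+1,\dots,p_{j+1}-1$, where the growing $Q'$-partial edge must be tracked together with $w_{j+1}$) overlap. At such a position one has three pairwise non-nested constraints: the $Q$-edge through $v_i$; the $Q'$-partial-edge containing $w_j$; and the $Q'$-partial-edge to be completed by $w_{j+1}$. Three constraints give up to $3\bigl(n-\delta^+(H)\bigr)$ bad vertices, and when $\ceilkl = 2$ (i.e.\ $\ell \leq k/2$, so the assumed bound is only $\delta^+(H) > n/2$) this exceeds $n$ and the greedy breaks down. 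A concrete instance is $k=4$, $\ell=2$, $p_1=8$, $p_2=12$: at $u$-position $9$ one must simultaneously control $\{v_5,v_6,v_7,v_8\}$ (a $Q$-edge), $\{v_7,w_1,v_8\}$ (a $Q'$-partial-edge), and $\{v_8,w_2\}$ (anticipating $w_2$), which are pairwise non-nested. This is precisely why the paper spaces the $T$-vertices $2k$ apart, so the exit/anticipation zones never overlap, and builds tight paths (so every consecutive window is an edge and no modular alignment of $\ell$-path edge positions with the $p_j$'s is needed). With those two repairs---$2k$ spacing rather than $k$, tight paths (pruned to $\ell$-paths at the end) rather than $\ell$-paths, and explicit anticipation at every $w_j$---your approach becomes essentially the paper's proof; as written it does not go through.
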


%To prove the claim, we construct an $\ell$-path $Q$ in $H$ with $2k(k-\ell) + \ell$ vertices $v_1 \dots v_{2k(k-\ell) + \ell}$ (numbered as they appear in $Q$) such that the vertices of $T$ are precisely the vertices $v_{2ki}$ with $i\in[k-\ell]$. The idea behind this is that for every vertex $u$ in $Q$ there is at most one vertex $v \in T$ for which $u$ and $v$ are contained in a common edge of $Q$. This will allow us to build two tight paths simultaneously: one including $T$ and one without. 

Arbitrarily label the vertices of $T$ as $v_{2kj}$ for $j\in[k-\ell]$ (we consider these vertices to have been chosen when speaking of `previously-chosen vertices'). For each $2k<i\leq 3k-1$, choose $v_i \in V(H) \setminus T$ so that $\deg(\{v_{2k}, v_{2k+1}, \dots, v_{i-1}, v_i\}) \geq 1$. By Proposition~\ref{choosevs}, we have at least $\delta^+(H) \geq \alpha n$ options for each choice. Next choose $v_{3k} \in N(v_{2k+1}, \dots, v_{3k-1})\setminus T$; again there are at least $\delta^+(H) \geq \alpha n$ options. For $i<2k-1$ choose the vertices $v_i$ in decreasing order of index, taking $v_i$ to be a previously-unchosen vertex in the intersection of the sets $N(v_{i+1}, \dots, v_{i+k-1})$ and $N(v_{i+1}, \dots, v_{i+k}) \setminus \{v_{2k}\}$. (In the case that the second set involves a neighbourhood of $k$ elements, we ignore it and choose $v_i$ in the first neighbourhood only.) Each of these sets has size at least $\delta^+(H)$, so the number of options is at least $2\delta^+(H) - n - (3k + |T|) \geq \alpha n$.

For each $3k < i \leq t+(k-\ell)$, choose a vertex $v_i \in N(v_{i-k+1}, \dots, v_{i-1}) \cap S$ which has not previously been chosen, where
\begin{enumerate}[noitemsep, label=(\alph*)]
    \item if $2kj<i<(2j+1)k$ for some $j\in[k-\ell]$, then $S = N(\{v_{i-k}, \dots, v_{i-1}\}\setminus \{v_{2kj}\})$,
     \item if $(2j-1)k<i<2kj$ for some $j\in[k-\ell]$, then $S = \{u : \deg(u,v_{(2j-1)k + 1}, \dots, v_{i-1},v_{2kj})\geq 1\}$, and
     \item if $ i = (2j-1) k$ for some $j\in[k-\ell]$, then $S = V(H)$.
\end{enumerate}
Again this means choosing $v_i$ in the intersection of two sets each of size at least $\delta^+(H)$, so the number of options is at least $2\delta^+(H) - n - (t+k-\ell) \geq \alpha n$. 

Having made all these choices, let $A := \{v_1, \dots, v_{t+k-\ell}\} \setminus T$, so $A$ contains all the $v_i$ except $v_{2kj}$ for $j \in [k-\ell]$.
Observe that the choices of the vertices $v_i$ ensure the vertices of $A \cup T$, ordered by index, form a tight path in $H$, but also that the vertices of $A$, ordered by index, form a tight path in $H$. Since $|A|$ and $|A \cup T|$ are both divisible by $k-\ell$, deleting edges from the latter tight path yields the desired $\ell$-path $Q$ witnessing that the vertices of $A$, ordered by index, form an element of $\mathcal{A}(T)$. Since we had at least $\alpha n$ options for each of the $t$ vertices not in $T$ that we chose, in total we obtain at least $\alpha^t n^t$ ordered $t$-tuples in $\mathcal{A}(T)$. This completes the proof of Claim~\ref{claimabs}.
\medskip
        
Let $c:= 3\beta^2/\alpha^t$, and choose a set $F$ of ordered $t$-tuples of vertices of $H$ at random by including each ordered $t$-tuple in $F$ with probability $p = cn^{1-t}$, independently of the choices made for each other ordered $t$-tuple. The random variables $|F|$ and $|\mathcal{A}(T) \cap F|$ for each $T \in \binom{V(H)}{k-\ell}$ are then distributed binomially with expectations $\mathbb{E}[|F|] = p n^t = cn$ and $\mathbb{E}[|\mathcal{A}(T) \cap F|] = p |\mathcal{A}(T)| \geq p\alpha^t n^t = c\alpha^t n$. So by Theorem~\ref{chernoff} we have with high probability that $|F| \leq 2cn$ and that for every $T \in \binom{V(H)}{k-\ell}$ we have $|\mathcal{A}(T) \cap F| \geq c \alpha^t n/2$.

Let $I(F)$ be the number of intersecting pairs in $F$, that is, of ordered $t$-tuples $F_1, F_2 \in F$ such that $F_1\cap F_2 \neq \emptyset$. The random variable $|I(F)|$ then has expectation $\mathbb{E}[|I(F)|] \leq  p^2 \cdot t^2 \cdot n^t \cdot n^{t-1} = c^2t^2 n,$
so by Markov's inequality we have $|I(F)| \leq 2 c^2 t^2 n$ with probability at least $1/2$. So we may fix an outcome of our random selection of $F$ so that all of these events hold, so
\begin{enumerate}[noitemsep, label=(\alph*)]
    \item $|F| \leq 2cn$,
    \item for each $T \in \binom{V(H)}{k-\ell}$ we have $|\mathcal{A}(T) \cap F| \geq c\alpha^t n/2$, and
    \item there are at most $2c^2 t^2  n$ pairs of tuples in $F$ with non-empty intersection.
\end{enumerate} 
Form $F' \subseteq F$ by deleting all ordered $t$-tuples which are not in any set $\mathcal{A}(T)$ and also deleting one tuple of each intersecting pair. Let $m := |F'| \leq |F| \leq 2cn$. Every ordered $t$-tuple in $F'$ is in $\mathcal{A}(T)$ for some $T \in \binom{V(H)}{k-\ell}$, and therefore supports an $\ell$-path $P^i$ in $H$. For each $i \in [m]$ let $a_i$ and $b_i$ be ends of $P^i$; by iterative application of Lemma~\ref{connectinglemma} we obtain vertex-disjoint $\ell$-paths $Q_1, \dots, Q_{m-1}$, each of length $\lceil \frac{k}{k-\ell} \rceil$, so that $Q_i$ has ends $b_i$ and $a_{i+1}$, and so that the interior vertices of each $Q_i$ do not intersect $\bigcup_{i \in [m]} V(P_i)$. Collectively these $\ell$-paths then form an $\ell$-path $P = P_1Q_1P_2Q_2\dots P_{m-1}Q_{m-1}P_m$ in $H$. Let $x := a_1$ and $y := b_m$ be the ends of $P$, and let $A = V(P)$; observe that $|A| \leq tm + km \leq 4tcn \leq \beta n$. Now consider a set $S \subseteq V(H) \setminus A$ with $|S| \leq \beta^2 n$ such that $k-\ell$ divides $|S|$. Arbitrarily partition $S$ into $(k-\ell)$-tuples $S_1, \dots, S_r$. For each $i \in [r]$ we have 
$$|F' \cap \mathcal{A}(S_i)| \geq |F \cap \mathcal{A}(S_i)| - 2c^2t^2 n \geq c \alpha^t n/2 - 2c^2t^2 \geq \beta^2 n \geq r,$$
so we may greedily assign each $S_i$ to an ordered $t$-tuple in $F' \cap \mathcal{A}(S_i)$ in such a way that no two sets are assigned to the same ordered $t$-tuple. Finally, for each $i \in [r]$ replace the corresponding path section $P_j$ of $P$ by an $\ell$-path $P'_j$ in $H$ with vertex set $V(P_j) \cup S_i$ and with the same ends as~$P_j$ (this exists since the ordered sequence of vertices of $P_j$ is in $\mathcal{A}(S_i)$). This transforms $P$ into an $\ell$-path $P'$ in $H$ with vertex set $A \cup S$ and with ends $x$ and $y$. 
\end{proof}

The final component of the proof of Theorem~\ref{main} is a path tiling lemma, which asserts the existence of a constant number of pairwise vertex-disjoint $\ell$-paths in $H$ which collectively cover almost all of the vertices of $H$.

\begin{lemma}\label{coverlemma}
Fix $k \geq 3$, $1\leq\ell\leq k-1$ and constants with $1/n_0 \ll 1/M \ll \beta, \alpha \ll 1/k$. If $H$ is a $k$-graph on $n \geq n_0$ vertices with $\delta^+(H) \geq \left(\dcover\right)n + \alpha n$ and no isolated vertices, then there exists a collection~$\mathcal{P}$ of at most $M$ pairwise vertex-disjoint $\ell$-paths in $H$ such that $|\bigcup_{P\in\mathcal{P}} V(P)| \geq (1-\beta) n$.
\end{lemma}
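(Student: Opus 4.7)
The plan is to combine hypergraph regularity with a weighted fractional matching argument in the reduced hypergraph. First, apply an appropriate weak regularity lemma to $H$: this gives a vertex partition of $V(H)$ (minus a small exceptional set) into equal clusters $V_1,\dots, V_t$, and a reduced multi-$k$-graph $R$ on $t$ vertices whose edges correspond to the $\varepsilon$-regular $k$-tuples of clusters of density at least some $d$. A standard slicing argument transfers the minimum positive codegree condition to $R$, so that $\delta^+(R) \ge \dcover t + (\alpha/2) t$; moreover an easy ``pick any neighbourhood'' argument (using that $S$ strong independent and $v \in S$ non-isolated implies $|V(H)\setminus (S\cup T)| \ge \delta^+(H)-1$ for $T = e\setminus\{v\}$) bounds every strong independent set in $R$ by at most $(1-\alpha/2)\,t/\floorkl$.

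The key new step is to find a weighted fractional $\ell$-path tiling of $R$. For each edge $e = \{U_1,\dots,U_k\}$ of $R$ and each designated position $j^* \in [k]$ (representing the ``large'' vertex class from Proposition~\ref{unbalancedpath}), introduce a non-negative variable $y_{e,j^*}$, where a unit of $y_{e,j^*}$ stands for an $\ell$-path embedded in $H[U_1\cup \cdots \cup U_k]$ using the partition of Proposition~\ref{unbalancedpath} with $U_{j^*}$ playing the large-class role. At every cluster $V$, impose the load constraint that the total number of vertices of $V$ used across all configurations is at most $|V|$, and maximise the total vertex coverage. I would then prove via LP duality that the optimum is at least $(1-\beta/2)n$: any dual certificate with smaller value would concentrate its support on a set $S$ of clusters such that every edge of $R$ has most of its vertices outside $S$. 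Because the weights $w_i$ are calibrated via Proposition~\ref{unbalancedpath} precisely so that this translates into a strong independent set of $R$ of size exceeding $t/\floorkl$, this contradicts the strong independent set bound above.

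Finally, an extreme-point optimal solution of this LP has at most $t$ non-zero variables $y_{e,j^*}$, giving a constant number of active configurations. For each such configuration I would use the $\varepsilon$-regularity of the block $(U_1,\dots,U_k)$, together with the consequence of Proposition~\ref{unbalancedpath} described just after its proof (which provides $\ell$-paths with vertex classes of prescribed sizes $w_i x \pm O(1)$), and a standard partition-respecting long-$\ell$-path embedding lemma, to build inside the block an $\ell$-path that consumes all but a small fraction of the vertex budget allotted to it by $y_{e,j^*}$. The load constraints ensure pairwise disjointness, the total number of paths is bounded by a constant $M = M(t,\alpha,\beta)$, and the uncovered set, consisting of exceptional vertices, clusters unused by the LP, and regularity slack, has size at most $\beta n$.

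The principal obstacle is the LP duality step: transforming a bad dual solution into a forbidden strong independent set requires delicate algebraic manipulation, because the dual constraints mix vertices with different weights (one of $(k-1)$ and the others of $\ctmod{k}{\ell}-1$), and the ``small'' support of the dual certificate must be shown to avoid being merely an ordinary independent set (which the positive codegree condition does not control). The calibration that makes this translation work is exactly the duality between minimum positive codegree and strong independent set size emphasised in the discussion, and it is where the specific weight vector of Proposition~\ref{unbalancedpath} plays its essential role.
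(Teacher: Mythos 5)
Your architecture matches the paper's closely: weak hypergraph regularity, a cluster $k$-graph carrying a transferred positive-codegree condition (the paper's Lemma~\ref{clusterdeg}, via Lemma~\ref{subgraph}), a fractional relaxation calibrated by the weights of Proposition~\ref{unbalancedpath}, and embedding unbalanced $\ell$-paths into regular blocks. Your variables $y_{e,j^*}$ are, in effect, the paper's $(w_1,\dots,w_k)$-weighted fractional matching over \emph{ordered} edges, since when $w_2=\dots=w_k$ the only datum an ordering carries is which cluster occupies the heavy first position.

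The gap lies in the step you flagged yourself. You propose to infer LP near-optimality by arguing that a bad dual certificate ``concentrates its support on a set $S$ of clusters such that every edge of $R$ has most of its vertices outside $S$,'' and then to invoke a strong-independent-set bound for the cluster graph. This is not how a dual certificate for this kind of LP behaves: the dual vector $\mathbf{y}$ is real-valued with no sign structure, and the inequalities $\mathbf{y}\cdot\chi_i(e)\leq 0$ mix all $k$ coordinates of an edge with two distinct positive weights. Nothing forces the positive part of $\mathbf{y}$ to be (nearly) strongly independent, and the upper bound on strong independent sets -- even in its correct form -- is a much weaker statement than infeasibility of the Farkas system. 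The paper's Lemma~\ref{pfm} closes this exact gap by a different, more concrete mechanism: sort the coordinates of $\mathbf{y}$, let $v_n$ be the coordinate of largest value, use the no-isolated-vertices hypothesis together with repeated applications of Proposition~\ref{choosevs} to build a single edge $e\ni v_n$ whose remaining $k-1$ vertices all have sorted index at least $\delta n$, and then show $\mathbf{y}\cdot\chi_n(e)\geq \mathbf{y}\cdot\mathbf{1}>0$ by a direct averaging computation, contradicting $\mathbf{y}\cdot\chi_n(e)\leq 0$. The weights $(k-1,\ctmod{k}{\ell}-1,\dots,\ctmod{k}{\ell}-1)$ are calibrated precisely so that this averaging works; no strong-independent-set argument appears. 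Incidentally, your auxiliary ``pick any neighbourhood'' bound is also misstated: with $T=e\setminus\{v\}$ you have $T\cap S=\emptyset$, so $N(T)$ may intersect $S$ freely. To get $N(T)\cap S=\emptyset$ you must instead extend $v$ itself to a $(k-1)$-set $T\ni v$ of positive degree (via Proposition~\ref{choosevs}), so that any edge through $T$ already uses its allowance of one $S$-vertex on $v$. This corrected bound is the right one, but as noted it does not by itself yield the fractional matching.
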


The proof of Lemma~\ref{coverlemma} is more involved, so we defer it to Section~\ref{sec:tiling}. Observe that Lemma~\ref{coverlemma} uses the same minimum positive codegree condition as Theorem~\ref{main}; a simple modification of Construction~\ref{construction} demonstrates that the lemma would not hold if the expression $\dcover$ was replaced by any smaller constant.

We now combine the three components presented above to prove Theorem~\ref{main}. Loosely speaking, our approach is as follows. First we apply the absorbing lemma to obtain an absorbing $\ell$-path $P_0$ in $H$. We then set aside a small randomly-selected set $W$ of vertices of $H$, then apply the path tiling lemma to find $\ell$-paths $P_1, \dots, P_t$ in $H$ which are vertex-disjoint from each other and from $P_0$ and which do not use any vertex of $W$. Finally we use the connecting lemma to connect the paths $P_0, P_1, \dots, P_t$ into a single long $\ell$-cycle $C$ in $H$, with the connections being comprised of vertices from $W$. The cycle $C$ then includes all vertices of $H$ except for a small set $X$, and the absorbing property of $P_0$ allows us to replace $P_0$ in $C$ by an $\ell$-path $P_0'$ with the same ends as $P_0$ and with vertex set $V(P_0) \cup S$. This gives a Hamilton $\ell$-cycle $C'$ in $H$.

\begin{proof}[Proof of Theorem~\ref{main}]
Introduce new constants $c, \gamma$ and $\beta$ with $1/n_0 \ll 1/c \ll \gamma \ll \beta \ll \alpha, 1/k$. Write $\delta := \dcover$, and let $H$ be a $k$-graph on $n \geq n_0$ vertices with minimum positive codegree $\delta^+(H) \geq \delta n + \alpha n$ and no isolated vertices.

Apply Lemma~\ref{absorbinglemma} to obtain an absorbing set $A \subseteq V(H)$ with $|A| \leq \beta n$ and distinct ordered $\ell$-sets $x_0$ and $y_0$ with the property that for every $S \subseteq V(H)$ with $|S| \leq \beta^2 n$ such that $k-\ell$ divides $|S|$ there is an $\ell$-path in $H$ with vertex set $A \cup S$ and with ends $x$ and $y$. In particular, taking $S = \emptyset$ we find that there is an $\ell$-path $P_0$ in $H$ with ends $x_0$, $y_0$ and with vertex set $A$. 

Let $V' := V(H) \setminus A$, $H' := H[V']$ and $n' := |V'| = n - |A|$, so $H'$ is a $k$-graph on $n'$ vertices with $\delta^+(H') \geq \delta^+(H) - |A| \geq \delta n' + \alpha n'/2$. Moreover, by Proposition~\ref{choosevs}(b) there are no isolated vertices in $H'$. Select a random subset $W \subseteq V'$ by including each vertex of $V'$ in $W$ with probability $\gamma$, independently of all other choices, so that $|W|$ and $\deg(T, W)$ for each $T \in \binom{V(H)}{k-1}$ are binomial random variables. By Theorem~\ref{chernoff} and Proposition~\ref{choosevs} it follows with high probability that
\begin{enumerate}[noitemsep, label=(\roman*)]
    \item $(1 - \alpha/10) \gamma n' \leq |W| \leq (1 + \alpha/10) \gamma n'$,
    \item for each set $T \in \binom{V(H)}{k-1}$ we have $\deg_{H'}(T, W) \geq \gamma \deg_{H'}(T) - \gamma \alpha n'/4$, and
    \item every set $S \subseteq V(H)$ with $\deg_{H'}(S) \geq 1$ has $\deg_{H[S \cup W]} \geq 1$.
\end{enumerate}
Fix an outcome of $W$ for which these events occur, and set $V'' := V' \setminus W$, $H'' := H[V'']$ and $n'' = |V''| = n' - |W|$, so $H''$ is a $k$-graph on $n''$ vertices with $\delta^+(H'') \geq \delta^+(H') - |W| \geq \delta n'' + \alpha n''/3$. Apply Lemma~\ref{coverlemma} to choose a collection $P_1, \dots P_t$ of $t \leq c$ vertex-disjoint $\ell$-paths in $H''$ with $|\bigcup_{i=1}^t V(P_i)| \geq (1-\beta^2/2) n''$. For each $i \in [t]$ let $x_i$ and $y_i$ be the ends of $P_i$. 

We now iteratively choose $\ell$-paths $Q_0, Q_1, \dots, Q_t$ in $H$, each of length $\ceil{\frac{k}{k-\ell}}$, such that for each $0 \leq i \leq t$ we have that
\begin{enumerate}[noitemsep, label=(\alph*)]
    \item $Q_i$ has ends $y_i$ and $x_{i+1}$, where we take $x_{t+1}$ to mean $x_0$, 
    \item all interior vertices of $Q_i$ are contained in $W$ (so are not in $\bigcup_{i=1}^t V(P_i)$), and
    \item $Q_i$ does not contain any vertex of $Q_0, Q_1, \dots, Q_{i-1}$.
\end{enumerate}
To see that we can do this, suppose that we have chosen $\ell$-paths $Q_0, Q_1, \dots, Q_{i-1}$ satisfying (a), (b) and (c), and that we now wish to choose $Q_i$. Set $W_i = W \cup y_i \cup x_{i+1}$ and $X_i = \bigcup_{j=0}^{i-1} V(Q_j)$. So by (i) and (ii) we have
\begin{align*}
    \delta^+(H[W_i]) &\geq \gamma \delta^+(H') - \frac{\gamma \alpha n'}{4} \geq \gamma \left(\delta n' + \frac{\alpha n'}{2}\right) - \frac{\gamma \alpha n'}{4}
    \\ &\geq \left(\delta + \frac{\alpha}{10}\right) \left(1 + \frac{\alpha}{10}\right) \gamma n'\geq \left(\delta + \frac{\alpha}{10}\right) |W_i|,
\end{align*}
and 
$$|X_i| \leq t\cdot \left(\ceil{\frac{k}{k-\ell}}(k-\ell) + \ell\right) \leq 3ck \leq \frac{\alpha |W_i|}{10}.$$ 
Moreover, since $y_i$ is an end of $P_i$ and $x_{i+1}$ is an edge of $P_{i+1}$ we have $\deg_{H'}(y_i), \deg_{H'}(x_{i+1}) \geq 1$, and so by (iii) we have $\deg_{H[W_i]}(y_i), \deg_{H[W_i]}(x_{i+1}) \geq 1$. 
It follows by Lemma~\ref{connectinglemma} (applied with $H[W_i]$, $\alpha/10$ and $|W_i|$ in place of $H$, $\alpha$ and $n$ respectively) that there exists an $\ell$-path $Q_i$ in $H[W_i]$ of length $\ceil{\frac{k}{k-\ell}}$ with ends $y_i$ and $x_{i+1}$ and with no interior vertex in $X_i$, as required.

Observe that (a), (b) and (c) ensure that the $\ell$-paths $P_0Q_0P_1Q_1\dots P_tQ_t$, in that order, form an $\ell$-cycle~$C$ in $H$. Moreover each vertex of $H$ not covered by $C$ is either one of the at most $(\beta^2/2)n'' \leq \beta^2n/2$ vertices of $H''$ not covered by $P_1, \dots, P_t$, or one of the at most $2 \gamma n' \leq 2 \gamma n$ vertices of $W$. We conclude that $|V(H) \setminus V(C)| \leq \beta^2n/2 + 2 \gamma n \leq \beta^2 n$. Moreover, since $k-\ell$ divides both $n$ (by assumption) and $|C|$ (since $C$ is an $\ell$-path) we know that $k-\ell$ divides $|V(H) \setminus V(C)|$. So the absorbing property of $P_0$ implies that there is a path $P_0'$ with vertex set $V(P_0) \cup (V(H) \setminus V(C))$ and the same ends $x_0$ and $y_0$ as $P_0$. It follows that the cycle $C'$ formed by the paths $P_0'Q_0P_1Q_1\dots P_tQ_t$ is a Hamilton $\ell$-cycle in $H$.
\end{proof}

\section{Path tiling} \label{sec:tiling}

In this section we prove Lemma~\ref{coverlemma}, our path tiling lemma. This argument makes use of weak hypergraph regularity, for which we make the following definitions.  Let $H$ be a $k$-graph and let $V_1,\dots, V_k$ be pairwise-disjoint subsets of $V(H)$. The \textit{density} of the $k$-tuple $\{V_1, \dots, V_k\}$ is defined to be $$d(V_1,\dots, V_k) = \frac{e(V_1,\dots, V_k)}{\prod_{i=1}^k |V_i|},$$
where $e(V_1,\dots, V_k)$ denotes the number of edges in $H$ that have exactly one vertex in $V_i$ for each $i\in[k]$.
For $\varepsilon>0$ and $d\geq 0$, the $k$-tuple $\{V_1, \dots, V_k\}$ is said to be \emph{$(d, \varepsilon)$-regular} if $|d(V_1',\dots, V_k') - d| \leq \varepsilon$ holds for every $k$-tuple $\{V_1', \dots, V_k'\}$ that satisfies, for each $i\in[k]$, that $V_i'\subseteq V_i$ and $|V_i'|\geq \varepsilon |V_i|$. We say that $\{V_1, \dots, V_k\}$ is \emph{$\varepsilon$-regular} if there is some $d>0$ such that $\{V_1,\dots, V_k\}$ is $(d, \varepsilon)$-regular.

The following theorem is a $k$-graph version of the Szemer\'edi regularity lemma in graphs; the proof is essentially identical to the proof for the graph case.

\begin{theorem}[Weak hypergraph regularity lemma]\label{weakreg}
    For all $\varepsilon > 0$ and $k, t_0 \in \mathbb{N}$ there exist $T_0, n_0 \in\mathbb{N}$ such that for every $k$-graph $H$ on $n \geq n_0$ vertices there is an integer $t$ with $t_0 \leq t \leq T_0$ and a partition of $V(H)$ into parts $V_0, V_1, \dots, V_t$ such that 
    \begin{enumerate}[noitemsep, label=(\roman*)]
        \item $|V_1| = \dots = |V_t|$ and $|V_0|\leq \varepsilon n$, and
        \item all but at most $\varepsilon \binom{t}{k}$ of the $k$-tuples chosen from $\{V_1, \dots, V_t\}$ are $\varepsilon$-regular.
    \end{enumerate}
\end{theorem}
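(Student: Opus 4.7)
The plan is to follow the standard Szemer\'edi-style refinement argument, adapted to the $k$-uniform setting by replacing each density $d(V_i, V_j)$ of the graph case by the $k$-tuple density $d(V_{i_1}, \ldots, V_{i_k})$ defined in the preceding paragraph. The central object is an \emph{index} (or mean-square-density) functional on equitable partitions $\mathcal{P} = \{V_0, V_1, \ldots, V_t\}$ of $V(H)$, defined by
$$\mathrm{ind}(\mathcal{P}) \;:=\; \frac{1}{\binom{t}{k}} \sum_{1 \le i_1 < \cdots < i_k \le t} d(V_{i_1}, \ldots, V_{i_k})^2,$$
which clearly lies in $[0,1]$. The proof then consists of three ingredients: a refinement lemma, a boundedness argument, and a size-balancing step to maintain an equitable partition with small exceptional set.

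First I would prove the key technical claim: if $(V_{i_1}, \ldots, V_{i_k})$ is a $k$-tuple that is not $\varepsilon$-regular, witnessed by subsets $V'_{i_j} \subseteq V_{i_j}$ with $|V'_{i_j}| \ge \varepsilon |V_{i_j}|$ and density discrepancy greater than $\varepsilon$, then the refinement of each $V_{i_j}$ into $V'_{i_j}$ and $V_{i_j} \setminus V'_{i_j}$ increases the contribution of this $k$-tuple to $\mathrm{ind}$ by at least $\varepsilon^{k+2}$. This follows from a straightforward application of the defect form of Cauchy--Schwarz: writing the density $d(V_{i_1}, \ldots, V_{i_k})$ as the weighted average of the $2^k$ sub-tuple densities obtained by refining in each coordinate, and then squaring, gives the desired increment. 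Combining this over all non-regular $k$-tuples (of which there are more than $\varepsilon \binom{t}{k}$ by assumption), and performing a common refinement across all coordinates, yields a refinement $\mathcal{P}^\star$ with $\mathrm{ind}(\mathcal{P}^\star) \ge \mathrm{ind}(\mathcal{P}) + \varepsilon^{2k+3}$ (up to constant factors in the exponent).

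Next I would iterate: starting from an arbitrary equitable partition with $t_0$ parts, either the current partition already satisfies condition~(ii) and we stop, or we apply the refinement lemma to obtain a strict index increase of at least $\varepsilon^{2k+3}$. Since $\mathrm{ind}(\mathcal{P}) \le 1$ throughout, the process terminates after at most $\varepsilon^{-(2k+3)}$ iterations, which bounds the final number of parts by a tower-type function $T_0 = T_0(\varepsilon, k, t_0)$. At each iteration the common refinement increases the number of parts by at most a factor of $2^{kT}$, where $T$ is the current number of parts, which is the familiar source of the tower-type bound.

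The main obstacle, and the place where some care is required, is the size-balancing step after each refinement: the common refinement produces parts of varying sizes, and we need to repartition each part into pieces of a common size $m$, sweeping the small leftover pieces into the exceptional class $V_0$. The standard argument goes through provided we track that the size-$m$ pieces are almost as good as the original refined parts (i.e.\ that the densities on them approximate the original ones to within $\varepsilon$), and that the total mass moved into $V_0$ over all iterations remains below $\varepsilon n$; this is arranged by choosing $m$ appropriately at each step and starting with $t_0$ large enough so that $|V_0| \le \varepsilon n/2$ initially, with the refinements contributing at most another $\varepsilon n / 2$. Once these bookkeeping details are handled, conclusions~(i) and~(ii) of the theorem follow directly from the termination of the refinement process.
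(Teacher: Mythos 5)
Your proposal follows the standard Szemer\'edi-style energy-increment argument (index functional on equitable partitions, defect Cauchy--Schwarz increment for non-regular $k$-tuples, common refinement, iteration, and size-balancing into $V_0$), which is exactly the route the paper indicates: it states that the proof is ``essentially identical to the proof for the graph case'' and does not give an independent argument. Your sketch is consistent with that and has no material gap.
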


We call a partition of $V(H)$ into parts $V_0, V_1, \dots, V_t$ which satisfies (i) and (ii) an \textit{$\varepsilon$-regular partition}. We also refer to the sets $V_1, \dots, V_t$ the \emph{clusters} of $H$, and $V_0$ as the \emph{exceptional set} of $H$. Given an $\varepsilon$-regular partition $V(H)$ into parts $V_0, V_1, \dots, V_t$, and a parameter $d \geq 0$, we define the associated \textit{$(d, \varepsilon)$-cluster $k$-graph} of $H$ to be the $k$-graph $R = R(d, \varepsilon)$ with vertex set~$[t]$ whose edges are all sets $S = \{i_1, \dots, i_k\} \in \binom{[t]}{k}$ with the property that the $k$-tuple $\{V_{i_1},\dots, V_{i_k}\}$ is $\varepsilon$-regular with $d(V_{i_1}, \dots, V_{i_k}) \geq d$.

We will work with the cluster $k$-graph arising from an application of Theorem~\ref{weakreg} to $H$, using the following tool presented by Halfpap, Lemons and Palmer~\cite{HLP} (their statement was more precise about the relationship between the constants involved). This shows that if any small set of edges is deleted from a $k$-graph of large minimum positive codegree, then we can delete a further small set of edges to obtain a subgraph with almost as large minimum positive codegree. 

\begin{lemma}\label{subgraph}\cite[Lemma 18]{HLP}
    Suppose that $1/n \ll \varepsilon  \ll \gamma \ll 1/k$, and let $H$ be a $k$-graph on $n$ vertices. If $H_1$ is a subgraph of $H$ formed by the deletion of at most $\varepsilon n^k$ edges of $H$, then there exists a subgraph $H_2$ of $H_1$ with $\delta^+(H_2)\geq\delta^+(H)- \gamma n.$ Moreover, $H_2$ can be obtained from $H$ by the deletion of at most $\gamma n^k$ edges.
\end{lemma}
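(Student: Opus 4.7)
The plan is an iterative cleanup of $H_1$. Starting from $H^{(0)} := H_1$, at each stage $i \geq 0$ identify the ``bad'' $(k-1)$-subsets, i.e.\ those $S$ with $1 \leq \deg_{H^{(i)}}(S) < \delta^+(H) - \gamma n$, delete every edge containing any bad $S$ to produce $H^{(i+1)}$, and iterate until no bad $(k-1)$-subset remains; set $H_2$ to be the resulting subgraph. By construction $H_2 \subseteq H_1$ and $\delta^+(H_2) \geq \delta^+(H) - \gamma n$ automatically, so it suffices to show that at most $\gamma n^k$ edges are removed from $H$ in total.

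The engine of the argument is a double-counting estimate. Any subset $S$ which is ever processed must have $\deg_H(S) \geq \delta^+(H)$ (since $\deg_{H^{(i)}}(S) \geq 1$ forces $\deg_H(S) \geq 1$, hence $\deg_H(S) \geq \delta^+(H)$ by definition of minimum positive codegree). At the time of processing $\deg_{H^{(i)}}(S) < \delta^+(H) - \gamma n$, so more than $\gamma n$ edges of $H$ through $S$ must already have been deleted (either in passing from $H$ to $H_1$, or in an earlier stage). Since each deleted edge contains only $k$ distinct $(k-1)$-subsets, summing over the set $\mathcal{B}$ of all processed subsets yields
$$|\mathcal{B}| \cdot \gamma n \;\leq\; k \cdot |E(H) \setminus E(H_2)|.$$

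The main obstacle is that this incidence bound does not by itself close into a control on $|E(H) \setminus E(H_2)|$ when $\delta^+(H)$ is close to $n$: the naive complementary estimate $|E(H_1) \setminus E(H_2)| \leq |\mathcal{B}|(\delta^+(H) - \gamma n)$ combined with the above leads to a vacuous inequality. To get around this, I would refine the analysis by splitting the iteration into a bounded number of stages $r = r(k, \gamma) = O(1)$ with gradually tightening thresholds $\delta^+(H) - (i+1)\gamma n/r$; a subset newly flagged at stage $i+1$ must then have lost at least $\gamma n/r$ edges during stage $i$'s deletion. A local count using that any edge $e$ through a non-flagged $S'$ deleted at stage $i$ must come from some $S \in T_i$ with $|S \cap S'| = k-2$ and $S \cup S' = e$, together with the incidence bound applied stage-by-stage, gives the recursion $|T_{i+1}| \leq (2kr/\gamma)\,|T_i|$. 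Starting from $|T_0| \leq 2kr\,\varepsilon n^{k-1}/\gamma$, which bounds the number of $(k-1)$-subsets losing at least $\gamma n/r$ edges in passing from $H$ to $H_1$ via $|E(H) \setminus E(H_1)| \leq \varepsilon n^k$, and summing the resulting geometric progression over the $r$ stages, the total edges deleted from $H_1$ is bounded by $(2kr/\gamma)^r \cdot O(k\varepsilon n^k/\gamma)$. Since $r$ depends only on $k$ and $\gamma$, this is less than $\gamma n^k - \varepsilon n^k$ provided $\varepsilon$ is sufficiently small relative to $\gamma$ and $k$, which is precisely what the hypothesis $\varepsilon \ll \gamma \ll 1/k$ affords.
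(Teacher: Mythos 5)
The lemma is quoted from Halfpap, Lemons and Palmer \cite[Lemma 18]{HLP}; the present paper does not prove it, so there is no in-paper argument to compare your proposal against. What follows evaluates your proposal on its own terms.

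You correctly diagnose why a single cleanup pass fails: the incidence double-count $|\mathcal{B}|\gamma n \leq k\,|E(H)\setminus E(H_2)|$ combined with $|E(H_1)\setminus E(H_2)|\leq |\mathcal{B}|\,n$ only yields $(1-k/\gamma)\,|E(H_1)\setminus E(H_2)| \leq k\varepsilon n^k/\gamma$, which is vacuous since $\gamma \ll 1/k < k$. However, the multi-stage fix has a genuine gap, and it sits at the heart of the argument. First, the recursion $|T_{i+1}|\leq(2kr/\gamma)\,|T_i|$ is a \emph{growth} bound; it does not force any $T_i$ to vanish and so does not show the cascade halts within a bounded number $r$ of rounds. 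You never specify what $r=r(k,\gamma)$ is or why $r$ rounds suffice, and since $\varepsilon$ is fixed \emph{before} $n$ in the hierarchy $1/n\ll\varepsilon\ll\gamma$, the quantity $(2kr/\gamma)^r|T_0|$ (with $|T_0|\sim\varepsilon n^{k-1}$) cannot be forced below $1$ by adding a constant number of further stages. Second, even if you stop after $r$ rounds, the output $H^{(r)}$ is not shown to satisfy the codegree requirement: a set $S$ with $\deg_{H^{(r)}}(S)\geq 1$ was not flagged at stage $r-1$, so $\deg_{H^{(r-1)}}(S)\geq\delta^+(H)-\gamma n$, but the stage-$(r-1)$ deletions pass through the potentially very large collection $T_{r-1}$ and can cut $\deg(S)$ down to as little as $1$; such sets are never cleaned up. If instead you hold the threshold fixed at $\delta^+(H)-\gamma n$ so that the iteration genuinely terminates, then a newly flagged set is only guaranteed to have lost one edge in the preceding round rather than $\gamma n/r$, and the geometric control on $\sum_i|T_i|$ collapses. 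Some additional structural idea is needed to cap the cascade and simultaneously deliver the final minimum positive codegree; the proposal as written does not provide one.
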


Our next lemma uses Lemma~\ref{subgraph} to show that if $H$ is a $k$-graph on $n$ vertices with no isolated vertices, and~$R$ is the cluster $k$-graph with $t$ vertices obtained following an application of Theorem~\ref{weakreg} to $H$, then~$R$ contains an almost-spanning subgraph $R'$ whose minimum positive codegree (as a proportion of the size of the vertex set) is almost as large as that of $H$.

\begin{lemma}\label{clusterdeg}
Suppose that $1/n_0 \ll 1/t \ll \varepsilon, d \ll \gamma \ll c, 1/k$ and that $q$ is an integer with $q \leq \gamma n/3$.
Let $H$ be a $k$-graph on $n\geq n_0$ vertices with $\delta^+(H) \geq cn$ and with no isolated vertices. Let $V_0, V_1, \dots, V_t$ be the sets of an $\varepsilon$-regular partition of $V(H)$, and let $R$ be the $(d, \varepsilon)$-cluster $k$-graph of $H$, so $V(R) = [t]$.  
We then have that $R$ contains a subgraph $R^*$ with $t'\geq (1-\gamma)t$ vertices, where $t'$ is divisible by $q$, such that $R^*$ has no isolated vertices and satisfies $\delta^+(R^*) \geq (c-\gamma)t'$.    
\end{lemma}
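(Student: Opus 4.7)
The plan is to clean $H$ via the subgraph lemma to obtain a subgraph $H_2$ all of whose edges are ``cross'' edges in regular dense $k$-tuples of clusters, and then to transfer the positive codegree of $H_2$ directly into a cluster-level positive codegree for a suitable sub-$k$-graph of $R$. Concretely, I form $H_1 \subseteq H$ by deleting every edge that either touches $V_0$, has two vertices in a common cluster, lies in an $\varepsilon$-irregular $k$-tuple of clusters, or lies in a regular $k$-tuple of density less than $d$. Routine counting using Theorem~\ref{weakreg} bounds the total number of deletions by $O(\varepsilon + d + 1/t)\,n^k$, which is less than $\gamma_0 n^k$ for any $\gamma_0$ with $\varepsilon, d, 1/t \ll \gamma_0 \ll \gamma c$. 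Applying Lemma~\ref{subgraph} produces $H_2 \subseteq H_1$ with $\delta^+(H_2) \geq (c-\gamma_0)n$; by construction every $H_2$-edge has one vertex in each of $k$ distinct non-exceptional clusters that form a regular tuple of density at least $d$.

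Next, I let $R'$ be the sub-$k$-graph of $R$ whose edges are exactly those $k$-sets $\{i_1,\dots,i_k\}$ for which $(V_{i_1},\dots,V_{i_k})$ contains at least one $H_2$-edge; the previous paragraph shows $R' \subseteq R$. To bound $\delta^+(R')$ from below, I fix $I = \{i_1,\dots,i_{k-1}\}$ with $\deg_{R'}(I) \geq 1$ and take an $H_2$-edge witnessing this, restricted to a $(k-1)$-set $T$ of vertices with one vertex in each $V_{i_j}$. Then $\deg_{H_2}(T) \geq 1$, hence $\deg_{H_2}(T) \geq (c-\gamma_0)n$; since every $H_2$-neighbour of $T$ lies in $V(H)\setminus V_0$ and each cluster has size at most $n/t$, these neighbours occupy at least $(c-\gamma_0)t$ distinct clusters $V_{i_k}$, each producing a distinct edge of $R'$ containing $I$. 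Thus $\delta^+(R') \geq (c-\gamma_0)t$.

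Finally, I form $R^*$ by deleting from $R'$ the set $J$ of its isolated vertices and then up to $q-1$ further vertices to enforce $q \mid |V(R^*)|$. A cluster $V_i$ is isolated in $R'$ exactly when no $H_2$-edge meets $V_i$, so every vertex in such a $V_i$ is isolated in $H_2$; since each vertex of $H$ has $\Omega(n^{k-1})$ incident $H$-edges by Proposition~\ref{choosevs}(c), becoming isolated in $H_2$ requires $\Omega(n^{k-1})$ deletions, and the total budget $\gamma_0 n^k$ therefore gives $|J| = O(\gamma_0 t)$. With $\gamma_0$ small enough and $q$ small relative to $t$, the bound $|J| + (q-1) \leq \gamma t$ gives $t' \geq (1-\gamma)t$. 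A brief calculation using $\gamma_0 \ll \gamma c$ then yields $\delta^+(R^*) \geq (c-\gamma_0)t - (t-t') \geq (c-\gamma)t'$; moreover, each vertex of $R''$ has $\Omega(t^{k-1})$ incident edges by Proposition~\ref{choosevs}(c), losing only $O(q\,t^{k-2})$ of them in the divisibility correction, so no new isolated vertices appear in $R^*$.

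The main obstacle is calibrating the constant hierarchy: the invocation of Lemma~\ref{subgraph} requires the aggregate deletion parameter $\varepsilon' = O(\varepsilon + d + 1/t)$ to satisfy $\varepsilon' \ll \gamma_0 \ll \gamma c$, forcing a careful choice of intermediate constants, and one must simultaneously ensure that $|J| + q$ fits inside the $\gamma t$ budget dictated by the requirement $t' \geq (1-\gamma)t$; the remaining steps are essentially bookkeeping once these thresholds are correctly placed.
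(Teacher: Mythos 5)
Your proposal follows the paper's argument essentially step for step: clean $H$ by deleting edges meeting $V_0$, edges with repeated clusters, and edges in irregular or sparse tuples; apply Lemma~\ref{subgraph} to recover a large minimum positive codegree in $H_2$; project $H_2$ to a reduced graph $R'\subseteq R$ and transfer the codegree cluster-by-cluster; bound the number of isolated clusters by counting how many $H_2$-edges a vertex must lose to become isolated; then trim to satisfy divisibility. The only cosmetic difference is that for the final ``no new isolated vertices'' check you invoke Proposition~\ref{choosevs}(c) and count destroyed edges directly, whereas the paper appeals to Proposition~\ref{choosevs}(b) on the induced subgraph --- both are correct and equivalent in spirit.
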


\begin{proof}
We begin by deleting edges of $H$ to obtain a subgraph $H_1$. Specifically, we delete the following edges.
    \begin{enumerate}[label=(\alph*), noitemsep]
        \item All edges intersecting $V_0$, of which there are at most $\varepsilon n^{k}$.
        \item All edges which intersect some $V_i$ more than once, of which there are at most $n^k/t$. Indeed, for each fixed $i \in [t]$, at most $|V_i| n^{k-2} \leq n^k/t^2$ edges intersect $V_i$ at least twice.
        \item All edges spanning a $k$-tuple $(V_{i_1}, \dots, V_{i_k})$ that is not $\varepsilon$-regular, of which there are at most $\varepsilon n^k$. Indeed, at most $\varepsilon\binom{t}{k}$ $k$-tuples of clusters are not $\varepsilon$-regular, each spanning less than $(n/t)^k$ edges.
        \item All edges spanning a $k$-tuple $(V_{i_1}, \dots, V_{i_k})$ with $d(V_{i_1}, \dots, V_{i_k})<d$, of which there are at most $dn^k$. Indeed, each such $k$-tuple supports at most $d (n/t)^k$ edges, and there are at most $t^k$ $k$-tuples of clusters. 
    \end{enumerate}

    Let $\varepsilon' = 2\varepsilon + d + 1/t$, so in total we delete at most $\varepsilon' n^k$ edges from $H$. Let $\gamma' := \gamma/(4k!c^{1-k})$ (so we may assume $\varepsilon \ll \gamma'$). By Lemma \ref{subgraph}, there is a subgraph $H_2$ of $H_1$, which can be obtained from $H$ by the deletion of at most $\gamma' n^k$ edges of $H$, with $\delta^+(H_2) \geq (c-\gamma')n$.

    Let $R'$ be the $k$-graph with vertex set $[t]$ whose edges are all sets $(i_1, \dots, i_k) \in \binom{[t]}{k}$ for which $H_2$ contains an edge spanning $(V_{i_1}, \dots, V_{i_k})$. Observe that the deletion of edges specified in (c) and (d) ensures that $R'$ is a subgraph of $R$.
    Now let $S' \in \binom{[t]}{k}$ have $\deg_{R'}(S') \geq 1$. This means there exists $j \in [t]$ such that $S' \cup \{j\}$ is an edge of $R'$, meaning that we may choose a vertex $v_i \in V_i$ for each $i \in [S']$ and a vertex $u \in V_j$ such that $\{v_i :i \in S'\} \cup \{j\}$ is an edge of $H_2$. Taking $S = \{v_i :i \in S'\}$, we then have $\deg_{H_2}(S) \geq 1$, so $\deg_{H_2}(S) \geq \delta^+(H_2) \geq (c-\gamma')n$. Furthermore, the deletion of edges in (a) and (b) ensures that no neighbours of $S$ in $H_2$ are in $V_0$ or in $V_i$ for some $i\in S'$. Since each cluster has size at most $n/t$, we deduce that $S$ has neighbours in at least $(c-\gamma')t$ clusters $V_i$ with $i\notin S\cup\{0\}$. So $S'$ has $\deg_{R'}(S') \geq (c-\gamma')t$, and we conclude that $\delta^+(R') \geq (c-\gamma')t$.

    We now show that $R'$ has at most $\gamma t/2$ isolated vertices. Indeed, each vertex of $H$ is contained in at least $\binom{cn}{k-1}$ edges of $H$ by Proposition~\ref{choosevs}. Since $H_2$ can be obtained from $H$ by deleting at most $\gamma' n^k$ edges from $H$, we deduce that at most $k\gamma' n^k/\binom{cn}{k-1} \geq k!c^{1-k}\gamma' n$ vertices of $H_2$ are isolated. Since $i \in [t]$ is an isolated vertex of $R'$ only if all of the at least $(1-\varepsilon) n/t \geq n/2t$ vertices of $V_i$ are isolated in $H_2$, it follows that at most $2k!c^{1-k}\gamma' t = \gamma t/2$ vertices of $R'$ are isolated. 
    
    Finally, let $R^*$ be the subgraph of $R$ formed by deleting all isolated vertices of $R'$ as well as at most $q$ arbitrary additional vertices, the number of which is chosen so that $q$ divides $t' := |V(R^*)|$. So $t' \geq t - \gamma t/2 - q \geq (1-\gamma) t$. For each vertex $j \in R^*$, since $j$ was not isolated in $R'$ it follows from Proposition~\ref{choosevs} that $j$ is not isolated in $R^*$, so $R^*$ has no isolated vertices. Likewise, since we deleted at most $\gamma t/2 + q \leq 5\gamma t/6$ vertices of $R'$ we have 
    $\delta^+(R^*) \geq \delta^+(R'') - 5 \gamma t/6 \geq (c - \gamma' - 5\gamma/6) t' \geq (c-\gamma)t$, as required.
\end{proof}

Our next lemma shows that a $k$-tuple of clusters with high density must contain a long $\ell$-path. Moreover, it allows us a degree of control over which vertices appear in which clusters. 

\begin{lemma}\label{longpath}
    Fix $k\geq 3$, $\ell \in [k-1]$ and $d \in (0,1)$. Let $H$ be a $k$-partite $k$-graph with vertex classes $X_1, \dots, X_k$ each of size $m$. Let $P$ be a $k$-uniform $\ell$-path and let $\{U_1, \dots, U_k\}$ be a partition of $V(P)$ with respect to which $P$ is $k$-partite. If $d(X_1, \dots, X_k) \geq d$ and $|U_i| \leq dm/k$ for each $i \in [k]$, 
    then $H$ contains a partition-respecting copy of $P$.
\end{lemma}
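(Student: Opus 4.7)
My plan is to build a partition-respecting copy of $P$ by a greedy edge-by-edge extension along the path. Enumerate the vertices of $P$ as $v_1,\dots,v_n$ in path order, so the edges of $P$ are $e_j = \{v_{(j-1)(k-\ell)+1},\dots,v_{(j-1)(k-\ell)+k}\}$ for $j\in[m_P]$, and each vertex $v_i$ lies in a class $U_{\pi(i)}$ of the given $k$-partition. The initial edge $e_1$ maps to some partition-respecting edge of $H$, which exists because the density hypothesis gives $|E(H)|\geq dm^k > 0$. For each $j\geq 2$, the $\ell$ shared vertices with $e_{j-1}$ are already embedded as a partition-respecting $\ell$-tuple $T_{j-1}$ in specific classes, and I must extend $T_{j-1}$ to an edge of $H$ by choosing $k-\ell$ new, unused vertices in the classes dictated by $P$.

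The main challenge is ensuring that each such extension exists: the density hypothesis gives only the average bound $\sum_T \deg_H(T) \geq dm^k$ on the degrees of partition-respecting $\ell$-tuples, so individual $\ell$-tuple degrees could vanish. My plan is to call a partition-respecting $\ell$-tuple \emph{heavy} if its degree into the remaining $k-\ell$ classes is at least $(d/2)m^{k-\ell}$, and to maintain the invariant that $T_{j-1}$ is heavy at every step. Because non-heavy tuples contribute at most $(d/2)m^k$ to the sum of degrees, heavy tuples support at least half the edges of $H$, and in particular heavy tuples number at least $(d/2)m^\ell$ in each class configuration. This abundance lets me choose the initial edge $e_1$ so that its outgoing $\ell$-tuple (shared with $e_2$) is heavy, and it should also enable propagation: from a heavy $T_{j-1}$ with at least $(d/2)m^{k-\ell}$ candidate extensions, a double counting should reveal that a positive fraction yield a heavy outgoing $T_j$. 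Vertex reuse is a minor issue because $|V(P)|\leq dm$, so the forbidden vertices exclude at most a small fraction of candidate extensions at each step.

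The main technical obstacle is rigorously establishing the heavy-to-heavy propagation. The naive bound---that the number of edges with a non-heavy outgoing tuple is at most (number of non-heavy tuples) times (their maximum degree), i.e.\ at most $m^\ell \cdot (d/2)m^{k-\ell} = (d/2)m^k$---is too weak, since this exceeds the $(d/2)m^{k-\ell}$ extension count from a single heavy $T_{j-1}$. A refined analysis must exploit the specific relationship between $T_{j-1}$ and the outgoing tuples of its extensions, determined by $P$'s class structure. The natural case split is according to whether $2\ell \leq k$ (where the incoming and outgoing $\ell$-tuples of an edge are disjoint, so the outgoing tuple is built from fresh vertices) or $2\ell > k$ (where they share $2\ell - k$ common vertices, inheriting structure from $T_{j-1}$). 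In the former case one applies a degree-restricted version of the averaging argument on the remaining $k - 2\ell$ free coordinates of each extension; in the latter, the forced sharing with $T_{j-1}$ may make heaviness propagate more naturally. Making this quantitatively rigorous---potentially by strengthening the heaviness threshold or by fixing the sequence of outgoing class configurations in advance---will be the bulk of the work.
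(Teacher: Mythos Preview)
Your propagation obstacle is real, and the sketch does not overcome it. In the case $2\ell \le k$, the outgoing $\ell$-tuple of an extension of $T_{j-1}$ is formed from newly chosen vertices; its heaviness is a property of those vertices alone and is essentially independent of whether the extension completes $T_{j-1}$ to an edge. The number of extensions of $T_{j-1}$ whose outgoing tuple is non-heavy can therefore be as large as $(1-d/2)m^\ell \cdot m^{k-2\ell} = (1-d/2)m^{k-\ell}$, which swamps the $(d/2)m^{k-\ell}$ extensions guaranteed by heaviness of $T_{j-1}$. Strengthening the threshold does not help, since the same imbalance persists at any constant threshold. The difficulty is structural: an $\ell$-tuple degree invariant is the wrong object to propagate along the path.

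The paper avoids this entirely by a preprocessing step rather than an invariant. Before embedding anything, iteratively delete every $(k-1)$-set $S$ with $0 < \deg(S) \le dm/k$ together with all edges containing $S$. A counting argument (tracking the ratio of edge--$(k-1)$-set incidences to the number of live $(k-1)$-sets, which starts at $\ge dm$ and does not decrease) shows that this terminates at a non-empty subgraph $H'$ with $\delta^+(H') > dm/k$. In $H'$ the greedy embedding is then trivial: build $P$ one vertex at a time, and at each step the last $k-1$ embedded vertices have at least $dm/k$ neighbours in the required class, while at most $|U_i| \le dm/k$ vertices of that class are already used (with the inequality in fact strict). The cleaning converts the global density hypothesis into a uniform local codegree condition, so no propagation argument is needed at all.
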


 \begin{proof} 
We first use an iterative argument to show that $H$ has a non-empty subgraph $H'$ with minimum positive codegree $\delta^+(H') > dm/k$. For this, we track a collection $\mathcal{S} \subseteq \binom{V(H)}{k-1}$ which initially consists of all sets $S \in \binom{V(H)}{k-1}$ with $\deg(S) \geq 1$. Note that since $H$ is $k$-partite every set in $S$ has $|S \cap V_i| \leq 1$ for each $i \in [k]$. In particular, we initially have $|\mathcal{S}| \leq km^{k-1}$. Our iterative argument will repeatedly remove elements of $\mathcal{S}$ and also edges of $H$. During this process we will keep track of the size of the number of \emph{incidences}, which are pairs $(e, S)$ where $e \in E(H)$, $S \in \mathcal{S}$ and $S \subseteq e$. Since initially we have $e(H) \geq dm^k$, and each edge is in precisely $k$ incidences, we initially have $|I| \geq dkm^k$, and so $|I|/|\mathcal{S}| \geq dm$.

We now repeat the following step: if there exists $S \in \mathcal{S}$ with $\deg(S) \leq dm/k$, then delete $S$ from $\mathcal{S}$ and delete every edge which contains $S$ as a subset from $H$. Observe that in this step the size of $\mathcal{S}$ is reduced by one, while the number of edges of $H$ is reduced by at most $dm/k$, meaning that the number of incidences is reduced by at most $dm$. So the ratio $|I|/|\mathcal{S}|$ does not decrease. Since $H$ has a finite number of edges, this process must terminate with a non-empty subgraph $H' \subseteq H$, at which point every $S \in \mathcal{S}$ has $\deg_{H'}(S) > dm/k$. Every $S \in \binom{V(H)}{k} \setminus \mathcal{S}$ was either not initially in $\mathcal{S}$, meaning that $\deg_H(S) = 0$ so $\deg_{H'}(S) = 0$, or was deleted from $\mathcal{S}$ at some point along with all edges containing it, so again we have $\deg_{H'}(S) = 0$. We conclude that $\delta^+(H') > dm/k$, as claimed.

To complete the proof, observe that the desired copy of $P$ may be chosen greedily: first choose an arbitrary edge of $H'$ for the first edge of $P$, then choose the remaining vertices one by one in the order that they appear in the path $P$. The point is that when we come to embed a vertex $x$ of $P$ in a cluster $V_i$, Proposition~\ref{choosevs} ensures that there are at least $\delta^+(H') > dm/k$ vertices $u \in V_i$ which are suitable for the embedding of $x$, and at most $|U_i| \leq dm/k$ vertices of $V_i$ have already been used for previously-considered vertices of $P$, so there is at least one vertex available for the embedding of $x$.
\end{proof}

At this point, the argument we would like to make to prove Lemma~\ref{coverlemma} is as follows. Fix $k \geq 3$ and $\ell \in [k-1]$, and let $H$ be a $k$-graph on $n$ vertices with no isolated vertices and $\delta^+(H) \geq \left(\dcover\right) n + \alpha n$. Apply Theorem~\ref{weakreg} to $H$ to obtain an $\varepsilon$-regular partition of $V(H)$ into parts $V_0, V_1, \dots, V_t$, and let $R$ be the associated $(d, \varepsilon)$-cluster $k$-graph of $H$ for a suitable parameter $d$. By Lemma~\ref{clusterdeg} we may choose an almost-spanning subgraph $R'$ of $R$ with no isolated vertices and with $\delta^+(H) \geq \left(\dcover\right) n + \alpha n/2$. Choose a perfect matching $M$ in $R'$, and for each edge $e \in M$ apply Lemma~\ref{longpath} repeatedly to choose vertex-disjoint $\ell$-paths in $H[\{V_i: i \in e\}]$ which cover approximately $(d-\varepsilon)m/k$ vertices in each cluster $V_i$ with $i \in e$. The point is that because $\{V_i: i \in e\}$ is an $\varepsilon$-regular $k$-tuple of clusters with density at least $d$, we know that for any subsets $U_i \subseteq V_i$ with $|U_i| \geq \varepsilon m$ for each $i \in e$ we have $d(V_i:i \in e) \geq d-\varepsilon$ (here $m$ denotes the common size of the clusters $V_1, \dots, V_t$). So we may keep choosing $\ell$-paths in $H[\{V_i: i \in e\}]$ until for some $i\in e$ all but at most $\varepsilon m$ vertices of $V_i$ are covered; since each path covers approximately the same number of vertices in each cluster $V_i$ with $i \in e$, it follows that almost all vertices of each $V_i$ with $i \in e$ are covered. Doing this for every $e \in M$ therefore gives a collection of $\ell$-paths which covers all vertices of $H$ except for the small number of vertices in clusters $V_j$ with $j \notin R^*$, the small number of vertices in $V_0$ and the small number of uncovered vertices in each cluster $V_j$ with $j \in R^*$, completing the proof.

The problem is that to ensure a perfect matching in a $k$-graph $H$ on $n$ vertices we need a minimum positive codegree condition of around $\delta^+(H) = \frac{k-1}{k}n$, as demonstrated by Halfpap and Magnan~\cite{HM}. In the case when $k-\ell$ divides $k$, this is the condition assumed by Lemma~\ref{coverlemma}, and so the above argument can be developed to a full proof for this case. Unfortunately, in the case when $k-\ell$ does not divide $k$ the minimum positive codegree condition assumed by Lemma~\ref{coverlemma} is weaker than the condition required for a perfect matching, and so the argument fails at this step. By a slight modification to the argument it would actually be sufficient to obtain only a matching in $R'$ covering almost all vertices of $R'$, or a perfect fractional matching in $R'$, but appropriate modifications of the extremal construction given by Halfpap and Magnan~\cite{HM} demonstrate that a weaker minimum positive codegree condition is not sufficient to ensure either of these.

Instead, we proceed by finding a perfect \emph{weighted} fractional matching in $R'$. A \textit{fractional matching} in a $k$-graph $H$ is a function $q:E(H) \to \mathbb{R}_{\geq 0}$ such that for every $v \in V(H)$ the sum of $q(e)$ over all edges $e$ which contain $v$ is at most~$1$; we say that $q$ is \emph{perfect} if this sum is precisely~$1$ for every vertex of $H$. A good way to think about this is that the edge $e$ places its weight $q(e)$ on each of its $k$ vertices and we require that the total weight placed on each vertex is at most~$1$ (or precisely~$1$ if $q$ is perfect). Now let $w_1, w_2, \dots, w_k \in \mathbb{R}_{\geq 0}$. A $(w_1, w_2, \dots, w_k)$-fractional matching in $H$ is conceptually similar, but now each edge $e$ places its weight upon the vertices it contains in an unbalanced manner. Specifically, for each $i \in [k]$ the edge $e$ places weight $w_iq(e)$ on the $i$th vertex of $e$; again we require that the total weight placed at each vertex is at most~$1$ (or precisely~$1$ if $q$ is perfect). This of course requires that we have an order on the vertices of $e$. For this reason our definition of a weighted fractional matching actually assigns a weight $q(e)$ to every permutation of every edge of $H$, rather than to edges as unordered sets.

This leads us to the following formal definitions. Let $H$ be a $k$-graph, and let $E^*(H)$ denote the set of \emph{ordered} edges of $H$, that is, the ordered $k$-tuples $(v_1, \dots, v_k)$ with $\{v_1, v_2, \dots, v_k\} \in E(H)$. So for each edge of $H$ we have $k!$ corresponding elements of $E^*(H)$. For each vertex $v \in V(H)$ and each $i \in [k]$ we write $E_i^v(H) \subseteq E^*(H)$ for the set of all ordered edges in $E^*(H)$ in which $v$ is the $i$th  element. Fix $w = (w_1,\dots,w_k) \in \mathbb{R}_{\geq 0}^k$. A \emph{$w$-weighted fractional matching} in $H$ is a function $q: E^*(H) \to \mathbb{R}$ such that $q(e) \geq 0$ for every $e \in E^*(H)$ and, for every $v \in V(H)$, we have 
$$\sum_{i \in [k]} \sum_{e \in E_i^v(H)} w_i \cdot q(e) \leq 1.$$ 
We say that $q$ is \emph{perfect} if we have equality in the latter inequality for every $v \in V(H)$, or equivalently, if $\sum_{e \in E^*(H)} q(e) = n/\sum_{i \in [k]} w_i $. Note that a (perfect) fractional matching in $H$ is identical to a (perfect) $(1, 1, \dots, 1)$-weighted fractional matching in $H$.

%It may help to interpret these definitions as follows: a $w$-weighted fractional matching $q$ assigns a weight $q(e)$ to each ordered edge of $H$, that is, each $e \in E^*(H)$. This weight is then placed on the vertices of $e$, with weight $w_1 q(e)$ being placed on the first vertex of $e$, weight $w_2 q(e)$ placed on the second vertex of $e$, and so forth. (If $\sum_{i \in [k]} w(i) = 1$ then we can see this process as distributing the weight of $e$ among the vertices of $e$ in a possibly-unbalanced fashion, but avoiding this restriction allows us simpler calculations). To be a $w$-weighted fractional matching we require that the total weight placed at each vertex is at most~$1$; in a perfect $w$-weighted fractional matching the total weight placed at each vertex is precisely one.

Our proof of Lemma~\ref{coverlemma} will use perfect $(w_1, w_2, \dots, w_k)$-weighted fractional matchings where $w_1 = k-1$ and $w_2 = w_3 = \dots = w_k = \ctmod{k}{\ell} - 1$. These weights are chosen to satisfy two purposes. Firstly, a perfect $(w_1, w_2, \dots, w_k)$-weighted fractional matching in the cluster $k$-graph $R$ of $H$ enables us to partition the clusters of $H$ into groups of subclusters of varying sizes for which we can find vertex-disjoint $\ell$-paths covering almost all vertices in the group. Secondly, the minimum positive codegree condition of Lemma~\ref{coverlemma} allows us to obtain a $(w_1, w_2, \dots, w_k)$-weighted fractional matching by an application of Farkas' lemma on solvability of finite systems of linear inequalities, as follows.

\begin{lemma}[Farkas' lemma]\label{farkas}
For every $\mathbf{v}\in\mathbb{R}^n$ and every finite set $\mathcal{X}\subseteq\mathbb{R}^n$, either there exist weights $\lambda_{\bf{x}} \geq 0$ for each $\bf{x} \in \mathcal{X}$ such that $\sum_{\bf{x} \in \mathcal{X}} \lambda_{\bf{x}} \bf{x} = \bf{v}$, or there exists $\mathbf{y}\in\mathbb{R}^n$ such that $\mathbf{y}\cdot\mathbf{x}\leq 0$ for every $\bf{x}\in\mathcal{X}$ and $\mathbf{y}\cdot\mathbf{v} > 0$.
\end{lemma}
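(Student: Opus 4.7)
My plan is to derive Farkas' lemma by a standard separating hyperplane argument applied to the convex cone generated by $\mathcal{X}$. Let $C := \{\sum_{\mathbf{x} \in \mathcal{X}} \lambda_{\mathbf{x}} \mathbf{x} : \lambda_{\mathbf{x}} \geq 0 \text{ for every } \mathbf{x} \in \mathcal{X}\}$; then the first alternative in the lemma statement is exactly the assertion $\mathbf{v} \in C$, so the whole content of the lemma is that whenever $\mathbf{v} \notin C$ one can produce a separating functional $\mathbf{y}$. I would obtain such a $\mathbf{y}$ as the difference $\mathbf{v} - \mathbf{c}^{\ast}$, where $\mathbf{c}^{\ast}$ is a nearest point of $C$ to $\mathbf{v}$ in the Euclidean norm.

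The step I expect to be the main obstacle is showing that $C$ is closed in $\mathbb{R}^{n}$, since this is what guarantees the existence of a nearest point $\mathbf{c}^{\ast}$. The image of a closed set under an arbitrary linear map need not be closed, but here the finiteness of $\mathcal{X}$ is crucial. My approach would be to invoke Carath\'eodory's theorem for cones: every element of $C$ can be written as a non-negative combination of at most $n$ linearly independent elements of $\mathcal{X}$. For each of the finitely many linearly independent subsets $S \subseteq \mathcal{X}$ the corresponding sub-cone $\{\sum_{\mathbf{x} \in S} \lambda_{\mathbf{x}} \mathbf{x} : \lambda_{\mathbf{x}} \geq 0\}$ is the image of the closed orthant $\mathbb{R}^{|S|}_{\geq 0}$ under an injective linear map, hence closed in $\mathbb{R}^{n}$; thus $C$ is a finite union of closed sets and is itself closed.

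With closedness in hand, the remaining steps are routine. Assuming $\mathbf{v} \notin C$, I would let $\mathbf{c}^{\ast} \in C$ minimise $\|\mathbf{v} - \mathbf{c}\|$ over $\mathbf{c} \in C$ and set $\mathbf{y} := \mathbf{v} - \mathbf{c}^{\ast}$, which is non-zero. A standard variational comparison of $\mathbf{c}^{\ast}$ with $(1-t)\mathbf{c}^{\ast} + t\mathbf{c}$ for arbitrary $\mathbf{c} \in C$ and small $t > 0$, using convexity of $C$, yields $\mathbf{y} \cdot \mathbf{c} \leq \mathbf{y} \cdot \mathbf{c}^{\ast}$ for every $\mathbf{c} \in C$. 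Since $C$ is a cone, applying this with $\mathbf{c} = 0$ and with $\mathbf{c} = 2\mathbf{c}^{\ast}$ forces $\mathbf{y} \cdot \mathbf{c}^{\ast} = 0$; hence $\mathbf{y} \cdot \mathbf{x} \leq 0$ for every $\mathbf{x} \in \mathcal{X}$ and $\mathbf{y} \cdot \mathbf{v} = \|\mathbf{y}\|^{2} > 0$, which is the desired witness of the second alternative.
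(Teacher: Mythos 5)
The paper states Farkas' lemma (Lemma~\ref{farkas}) as a classical result without proof, so there is no in-paper argument to compare against. Your proof is the standard one and is correct and complete: you reduce the lemma to the assertion that the finitely generated convex cone $C = \mathrm{cone}(\mathcal{X})$ is closed, obtain closedness via Carath\'eodory's theorem for cones (writing $C$ as a finite union of simplicial sub-cones, each the image of a closed orthant under an injective linear map and hence closed), and then deduce the separating functional from the nearest-point projection onto $C$. The variational step giving $\mathbf{y}\cdot\mathbf{c}\leq\mathbf{y}\cdot\mathbf{c}^\ast$ for all $\mathbf{c}\in C$, followed by substituting $\mathbf{c}=0$ and $\mathbf{c}=2\mathbf{c}^\ast$ to force $\mathbf{y}\cdot\mathbf{c}^\ast=0$, is exactly the right way to exploit that $C$ is a cone and not merely a convex set. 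One small point worth making explicit when writing this up: an injective linear map between finite-dimensional spaces is proper (as $\|A\mathbf{u}\|\geq c\|\mathbf{u}\|$ for some $c>0$ by compactness of the unit sphere), which is what guarantees that it sends the closed orthant to a set closed in $\mathbb{R}^n$ rather than merely closed in the image subspace; you gesture at this via ``injective linear map, hence closed'' but a referee might want the word ``proper'' to appear.
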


\begin{lemma}\label{pfm}
Let $G$ be a $k$-graph on $n$ vertices, where $n$ is divisible by $\floor{\frac{k}{k-\ell}}(k-\ell)$, and set $w_1 := k-1$ and $w_i := \ctmod{k}{\ell} - 1$ for each $2 \leq i \leq k$. If $\delta^+(G) \geq \left(\dcover\right) n$ and $G$ has no isolated vertices, then $G$ admits a $(w_1, \dots, w_k)$-weighted perfect fractional matching.
\end{lemma}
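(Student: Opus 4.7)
My plan is to apply Farkas' lemma (Lemma~\ref{farkas}) to reduce the existence of the desired weighted perfect fractional matching to showing that a certain dual system has no solution. Set $\mathbf{v} = (1, \dots, 1) \in \mathbb{R}^{V(G)}$ and, for each $\sigma \in E^*(G)$, let $\mathbf{x}_\sigma \in \mathbb{R}^{V(G)}$ be the vector with $(\mathbf{x}_\sigma)_v = w_i$ when $v$ is in position $i$ of $\sigma$ and $0$ otherwise. A nonnegative combination $\sum_\sigma q(\sigma) \mathbf{x}_\sigma = \mathbf{v}$ is precisely a $(w_1, \dots, w_k)$-weighted perfect fractional matching in $G$, so by Farkas it suffices to rule out the alternative: a function $y: V(G) \to \mathbb{R}$ with $\sum_v y_v > 0$ and $\sum_i w_i y_{\sigma(i)} \le 0$ for every $\sigma \in E^*(G)$.

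Setting $w := \floor{k/(k-\ell)}(k-\ell) - 1$ (so $w_1 = k-1$ and $w_i = w$ for $i \ge 2$), the dual constraint is equivalent to $(k-1-w)y_u + w\Sigma_e \le 0$ for every edge $e \in E(G)$ and every $u \in e$, where $\Sigma_e := \sum_{v \in e} y_v$; summing over $u \in e$ gives $\Sigma_e \le 0$ for every edge. Assume for contradiction such a $y$ exists. I would first observe that $V^- := \{v : y_v < 0\}$ is nonempty: otherwise $y_v \ge 0$ combined with $\Sigma_e \le 0$ would force $\Sigma_e = 0$ and hence $y_v = 0$ for every non-isolated vertex, contradicting $\sum y_v > 0$ (using the hypothesis that $G$ has no isolated vertices). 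Rescale so that $\max_v y_v = 1$, attained by some $v^*$.

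The core of the proof is to construct a $(k-1)$-set $S$ with $v^* \in S$ and $\deg(S) \ge 1$ for which $\Sigma_S$ is not too small, and then use the dual constraint to obtain a strong upper bound on $\sum_v y_v$. I build $S = \{v^*, u_2, \dots, u_{k-1}\}$ by iterating Proposition~\ref{choosevs}(a), at each step choosing $u_j$ to maximize $y$ among the eligible vertices (of which there are at least $\delta^+(G) \ge (1-\alpha_0)n$, where $\alpha_0 := 1/\floor{k/(k-\ell)}(k-\ell)$). For every $u \in N(S)$, applying the dual constraint to $S \cup \{u\}$ with $v^*$ as the first vertex yields
$$y_u \le -\frac{k-1-w}{w} - \Sigma_S.$$
Summing over $v \in V(G)$, using $|N(S)| \ge (1-\alpha_0)n$, $|V(G) \setminus (S \cup N(S))| \le \alpha_0 n - (k-1)$, $\max y_v = 1$, and the identity $\alpha_0 w = 1 - \alpha_0$, leads after simplification to
$$\sum_v y_v \le \Sigma_S\bigl(1 - (1-\alpha_0)n\bigr) + n\bigl(1 - (k-1)\alpha_0\bigr) - (k-1),$$
which is strictly negative for large $n$ whenever $\Sigma_S$ exceeds a threshold depending on $k$ and $\ell$ (roughly $1/(k-1)$ when $(k-\ell)\mid k$, and at most $0$ otherwise).

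The main obstacle is ensuring the greedy extension delivers $\Sigma_S$ above this threshold. In favorable cases — e.g.\ when $|V^+| := |\{v: y_v > 0\}|$ is at least $\alpha_0 n + k - 2$, so the eligible extension set at each step necessarily intersects $V^+$ — this is immediate and gives $\Sigma_S \ge 1$. In less favorable cases, the condition $\sum y_v > 0$ forces $\sum_{V^-}|y_v| < \sum_{V^+} y_v \le |V^+|$, so the total "negative mass" in $y$ is bounded by $|V^+|$ (which is then small); combined with the slack of $(k-1)$ in our hypothesis $\delta^+(G) \ge (1-\alpha_0)n$ relative to the extremal value $(1-\alpha_0)n - (k-1)$ furnished by Construction~\ref{construction}, this should provide the strict margin needed to push $\Sigma_S$ above the threshold, completing the contradiction.
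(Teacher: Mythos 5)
Your Farkas reduction and the description of the dual constraint are exactly the setup used in the paper, so the first half of your plan is on target. Where you diverge is in \emph{how} you derive a contradiction from the dual vector $\mathbf{y}$, and unfortunately I believe there is a genuine gap there for $k\geq 4$.

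You normalise $\max y_v=1$, greedily build $S$, then sum the constraint over all $u\in N(S)$, and you must split into cases depending on the size of $|V^+|$. Your own write-up is tentative about the second case, and with good reason: working out the numbers, the measure argument ``$\sum_{V^-}|y_v|<|V^+|$, eligible set at each step has $\geq(1-\alpha_0)n$ vertices'' only yields $y_{u_j}>-\frac{\alpha_0 n + k-2}{(1-\alpha_0)n}$, hence
$$\Sigma_S > \frac{\bigl(1-(k-1)\alpha_0\bigr)n-(k-2)^2}{(1-\alpha_0)n},$$
while the threshold you need from your summation inequality
$\sum_v y_v \le \Sigma_S\bigl(1-(1-\alpha_0)n\bigr)+\bigl(1-(k-1)\alpha_0\bigr)n-(k-1)$
is
$$\Sigma_S \ge \frac{\bigl(1-(k-1)\alpha_0\bigr)n-(k-1)}{(1-\alpha_0)n-1}.$$
Cross-multiplying, the lower bound beats the threshold exactly when $(-k^2+5k-5)(1-\alpha_0)n - (1-(k-1)\alpha_0)n + (k-2)^2\geq 0$; this holds for $k=3$ but fails for every $k\geq 4$ (for instance $k=4,\ell=3$: lower bound $\tfrac{n-16}{3n}$, threshold $\tfrac{n-12}{3n-4}$, and $\tfrac{n-16}{3n}<\tfrac{n-12}{3n-4}$). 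The ``$(k-1)$ slack'' you invoke goes in the wrong direction: it makes the threshold \emph{harder} to reach, not easier, and there is nothing in the remaining $O(1/n)$ terms to rescue the argument. The underlying reason is that summing one dual constraint over all of $N(S)$ and crudely bounding the remaining vertices by $1$ is lossy.

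The paper avoids the case split entirely. After Farkas, relabel so that $y_1\leq y_2\leq\dots\leq y_n$. By repeated application of Proposition~\ref{choosevs}(a) (noting that the eligible set at each step has size $\geq\delta^+(G)=\delta n$ and is automatically disjoint from the already chosen vertices) one can choose an edge $e=\{v_n,v_{i_1},\dots,v_{i_{k-1}}\}$ with every $i_j\geq\delta n$. A single dual constraint on $\chi_n(e)$, combined with the elementary \emph{sorting} inequality $\mathbf{y}\cdot\mathbf{1}\leq(1-\delta)n\,y_n+\delta n\,y_{\delta n}$, then gives $0\geq \frac{n}{(k-1)\floorkl}\,\mathbf{y}\cdot\chi_n(e)\geq(1-\delta)n\,y_n+\delta n\, y_{\delta n}\geq\mathbf{y}\cdot\mathbf{1}>0$ directly. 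The key point is that all $k-1$ vertices $v_{i_j}$ satisfy $y_{i_j}\geq y_{\delta n}$, and the sorting inequality exploits that exactly, with no slack lost. I would suggest replacing your $N(S)$-summation and case analysis with this sorting argument; your greedy construction of $S$ already guarantees $y_{u_j}\geq y_{\delta n}$, so it is only the final step that needs to change.
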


\begin{proof}
     Let $v_1,\dots,v_n$ be the vertices of $G$ and set $\delta := \dcover$, so $\delta n$ is an integer. For each edge $e$ of $G$ and each $v_i\in e$, we define $\chi_{i}(e)\in\mathbb{R}^n$ to be the vector whose $i$th coordinate is $k-1$, whose $j$th coordinate is $\ctmod{k}{\ell}-1$ for each $j$ such that $v_j\in e\setminus\{v_i\}$, and with all other coordinates equal to $0$. Let $\mathcal{X}$ be the collection of vectors $\chi_{i}(e)$ for each $e\in E(G)$ and each $v_i\in e$.

     Suppose for a contradiction that $G$ does not admit a $(w_1, \dots, w_k)$-weighted perfect fractional matching. It follows that there are no weights $\lambda_{\bf{x}} \geq 0$ for each $\bf{x} \in \mathcal{X}$ with $\sum_{\bf{x} \in \mathcal{X}} \lambda_{\bf{x}} \bf{x} = \bf{1}$ (otherwise, using the $\lambda{\bf{x}}$ as edge weights would give us a $(w_1, \dots, w_k)$-weighted perfect fractional matching in $G$). So by Lemma~\ref{farkas} there exists $\mathbf{y} = (y_1, \dots, y_n) \in \mathbb{R}^n$ such that $\mathbf{y}\cdot \mathbf{1} > 0$ and $\mathbf{y}\cdot \mathbf{x} \leq 0$ for every $\mathbf{x}\in\mathcal{X}$. 
     
By reordering the vertices if necessary, assume without loss of generality that $y_i \geq y_j$ for all $i\geq j$. Since $G$ has no isolated vertices we have $\deg(v_n) \geq 1$, and so by repeated applications of Proposition~\ref{choosevs} we may choose $i_1, i_2, \dots, i_{k-1} \geq \delta n$ so that for each $1 \leq j \leq k-1$ we have $\deg(v_n, v_{i_1}, v_{i_2}, \dots, v_{i_j}) \geq 1$. Observe in particular that $e = \{v_n, v_{i_1}, \dots, v_{i_{k-1}}\}$ is an edge of $G$.

    Define $\mathbf{a} \in \mathbb{R}^n$ to be the vector whose $n$th coordinate is $k-1$, whose $j$th coordinate is $\ctmod{k}{\ell}-1$ for each $j\in\{i_1,\dots, i_{k-1}\}$, and all of whose other coordinates are $0$. Observe that we then have $\mathbf{a} = \chi_n(e) \in \mathcal{X}$, so $\mathbf{y}\cdot\mathbf{a} \leq 0$. It follows that
     \begin{align*}
         0 &\geq \frac{n}{(k-1)\ctmod{k}{\ell}}\mathbf{y}\cdot\mathbf{a} \\
         &= \frac{n}{(k-1)\ctmod{k}{\ell}} \left((k-1)y_n + \sum_{j=1}^{k-1}\left(\ctmod{k}{\ell}-1\right)y_{i_j})\right)\\
         &\geq (1-\delta)n\cdot y_n + \delta n \cdot y_{\delta n}
         \geq (y_n+y_{n-1}+\dots + y_{\delta n +1}) + (y_{\delta n} + y_{\delta n-1} + \dots + y_1)
         =\mathbf{y}\cdot\mathbf{1} > 0,
     \end{align*}
a contradiction. We conclude that $G$ admits a $(w_1, \dots, w_k)$-weighted perfect fractional matching, as required.
\end{proof}

We will use Lemma~\ref{pfm} through the following corollary, which states that with a slightly stronger minimum positive codegree condition we can additionally require that all edge weights in our weighted perfect fractional matching are small.

\begin{corollary}\label{pfm_low_weight}
Let $G$ be a $k$-graph on $n$ vertices, where $n$ is divisible by $\floor{\frac{k}{k-\ell}}(k-\ell)$, and set $w_1 := k-1$ and $w_i = \ctmod{k}{\ell} - 1$ for each $2 \leq i \leq k$. If $\delta^+(G) \geq \left(\dcover\right) n + \alpha n$ and $G$ has no isolated vertices, then $G$ admits a $(w_1, \dots, w_k)$-weighted perfect fractional matching~$q$ with $q(e) \leq 1/(\alpha n)$ for every $e \in E^*(G)$.
\end{corollary}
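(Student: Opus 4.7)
The plan is to reduce this corollary to Lemma~\ref{pfm} via a random sampling and averaging argument. Fix a small parameter $p$ with $p^{k-1} = (k-1)/(4\alpha n)$ and a large integer $N := n^{k+2}$. Independently sample subsets $A_1, \dots, A_N \subseteq V(G)$, where each $A_j$ is obtained by including each vertex of $G$ independently with probability $p$.

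By the Chernoff bound (Theorem~\ref{chernoff}) and a union bound over $j \in [N]$, vertices, and $(k-1)$-sets of $V(G)$, the following hold with high probability. For every $j \in [N]$, $|A_j| = (1+o(1))pn$; every $(k-1)$-set $S \subseteq A_j$ with $\deg_G(S) \geq 1$ satisfies $\deg_{G[A_j]}(S) \geq \left(\dcover\right)|A_j|$; and $G[A_j]$ has no isolated vertex. The slack $\alpha n$ in the codegree hypothesis of this corollary (as compared to Lemma~\ref{pfm}) is precisely what is needed to absorb the multiplicative Chernoff error when passing from $G$ to $G[A_j]$. After trimming each $A_j$ by removing at most $\ctmod{k}{\ell}-1$ vertices to ensure divisibility, Lemma~\ref{pfm} applied to $G[A_j]$ yields a $(w_1, \dots, w_k)$-weighted perfect fractional matching $q_j$, which we extend by zero to a function on $E^*(G)$; necessarily $q_j(e) \leq 1/(k-1)$. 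A further Chernoff application gives $|\{j : e \subseteq A_j\}| \leq 2Np^k$ uniformly for every $e \in E^*(G)$ with high probability.

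Define $q^\circ := (Np)^{-1}\sum_{j=1}^{N} q_j$. Then for every vertex $v \in V(G)$, the sum $\sum_i w_i \sum_{e \in E_i^v(G)} q^\circ(e) = (Np)^{-1}|\{j : v \in A_j\}| = 1 + o(1)$ by Chernoff; and for every $e \in E^*(G)$ we have $q^\circ(e) \leq (Np)^{-1} \cdot 2Np^k/(k-1) = 2p^{k-1}/(k-1) \leq 1/(2\alpha n)$. To convert this near-perfect fractional matching into an exact one while preserving the upper bound on edge weights, we take a convex combination of $q^\circ$ with a suitable exact weighted PWFM on $G$ obtained from another application of Lemma~\ref{pfm}, together with a further adjustment to eliminate per-vertex residuals. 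Since the errors at each vertex are $o(1)$ while we have a factor-two buffer in the bound, such a correction can be absorbed without breaching the target $q(e) \leq 1/(\alpha n)$. The main obstacle is this correction step: the per-vertex errors need not vanish in aggregate, so a careful construction of the correcting matching is required (naive uniform scaling or convex combination with a single fixed PWFM does not immediately suffice).
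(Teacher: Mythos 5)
Your approach is genuinely different from the paper's: you reduce to Lemma~\ref{pfm} by random sampling and averaging, whereas the paper uses an extremality argument. Unfortunately your proof has a real gap, and it is exactly the one you flag at the end: the averaged function $q^\circ$ is only a \emph{near}-perfect weighted fractional matching, with per-vertex weight $1 \pm o(1)$ rather than exactly $1$, and converting it to an exactly perfect one while keeping the edge-weight cap below $1/(\alpha n)$ is not a one-line correction. Scaling down and adding a fixed exact PWFM fails because the vertex-by-vertex deficits have different signs and cannot be cancelled by a single correction; one would need a correcting \emph{matching} whose per-vertex effect matches the deficit vector $(1 - \text{weight at }v)_v$, and guaranteeing that such a correcting matching exists with \emph{all} its edge weights $O(1/n)$ is itself a statement of the same difficulty as the corollary. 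So as written this is a reduction that regenerates the problem. There is also a secondary issue you did not note: your Chernoff-plus-union-bound only controls codegrees $\deg_{G[A_j]}(S)$, which are binomial; the event that $G[A_j]$ has no isolated vertex is \emph{not} a sum of independent indicators (edges through a fixed vertex overlap), so it needs a separate second-moment or Janson-type argument rather than plain Chernoff. Finally, the trimming of each $A_j$ to fix divisibility must be done in a way that does not bias which vertices are covered (e.g.\ uniformly at random), and this should be stated; otherwise the per-vertex expectation computation breaks.

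For comparison, the paper avoids all of this by a short extremal/LP argument. It takes a $(w_1,\dots,w_k)$-weighted perfect fractional matching $q$ minimising the maximum ordered-edge weight $M$, and deletes from $G$ every edge carrying that maximum weight. If the resulting graph $G'$ still admitted a weighted perfect fractional matching $q'$, the convex combination $(1-\nu)q + \nu q'$ would strictly lower the maximum, a contradiction; hence by Lemma~\ref{pfm}, $G'$ violates either the positive-codegree bound or the no-isolated-vertex condition. Either way some vertex $v$ lost more than $\alpha n$ incident edges, each contributing at least $M$ to the (unit) total weight at $v$, forcing $M \le 1/(\alpha n)$. This sidesteps any issue of converting approximate to exact. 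Your approach is not unsalvageable in spirit --- averaging random perfect fractional matchings is a known technique --- but you would need to supply the missing correction argument, and that is where the real content lies.
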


\begin{proof}
Let $Q$ be the set of all $(w_1, \dots, w_k)$-weighted perfect fractional matchings $q$ in $G$. For each $q \in Q$ let $m(q)$ be the maximum edge weight in $q$, that is, $m(q) := \max_{e \in E^*(G)} q(e)$. 
Let $M := \min_{q \in Q} m(q)$; we know that this minimum exists since the elements of $Q$ are solutions to a linear program with rational coefficients. So we may fix $q \in Q$ with $m(q) = M$. Since $G$ is finite it follows that there exists $\nu < M$ for which each edge $e \in E^*(G)$ has either $q(e) = M$ or $q(e) < M - \nu$. 

Let $G'$ be the subgraph of $G$ obtained by deleting all edges of $G'$ which have an ordering $e$ with $q(e) = M$. If $G'$ admits a $(w_1, \dots, w_k)$-weighted perfect fractional matching $q'$, then the weighted average $q''$ of $q$ and $q'$ defined by setting $q''(e) := (1-\nu) q(e) + \nu q'(e) < (1-\nu) (M-\nu) + \nu < M$ for every $e \in E^*(G')$ and $q''(e) := (1-\nu) q(e) < q(e) = M$ for every $e \in E^*(G) \setminus E^*(G')$ gives a $(w_1, \dots, w_k)$-weighted perfect fractional matching $q''$ in $G$ which contradicts our choice of $M$. (Here we used the fact that $q'(e) \leq 1$, which follows from the fact that $w_i \geq 1$ for each $i \in [k]$ and $q'$ is a $(w_1, \dots, w_k)$-weighted perfect fractional matching.)

We conclude that $G'$ has no $(w_1, \dots, w_k)$-weighted perfect fractional matching. It follows by Lemma~\ref{pfm} that either $\delta^+(G') < \left(\dcover\right) n$ or $G'$ has an isolated vertex. Either way there must exist a vertex $v$ which is contained in more than $\alpha n$ distinct edges of $G$ which are not edges of $G'$. Each such edge has an ordering $e$ with $q(e) = M$, so the total weight at $v$ is at least $M \cdot (\alpha n) \cdot \min_{i \in [k]} w_i \leq M \alpha n$. On the other hand, the total weight at $v$ is~$1$ by definition of a perfect weighted fractional matching, so $M < 1/(\alpha n)$ and therefore $q$ is as required.
\end{proof}

We are now ready to combine the tools we have developed to prove the main result of this section, our path tiling lemma. This proceeds in the same way as the approach sketched earlier in this section, except that we now use a weighted perfect fractional matching in $R'$ rather than a perfect matching.

\begin{proof}[Proof of Lemma~\ref{coverlemma}]
Introduce new constants satisfying 
$$\frac{1}{n_0} \ll \frac{1}{M} \ll \frac{1}{T_0} \ll \frac{1}{t_0} \ll \varepsilon \ll d \ll \gamma \ll \beta, \alpha \ll \frac{1}{k}.$$ 
Let $H$ be a $k$-graph on $n \geq n_0$ vertices with $\delta^+(H)\geq \left(\dcover \right) n + \alpha n$ and with no isolated vertices. 
Apply Theorem~\ref{weakreg} to obtain an integer $t$ with $t_0 \leq t \leq T_0$ and an $\varepsilon$-regular partition of $V(H)$ into parts $V_0, V_1, \dots, V_t$. In particular $|V_0| \leq \varepsilon n$; let $m := |V_1| = \dots = |V_t|$, so $m \geq (1-\varepsilon)n/t$.

    Let $R$ be the $(d, \varepsilon)$-cluster $k$-graph of $H$ with respect to this partition, so $R$ has vertex set~$[t]$. By Lemma~\ref{clusterdeg} there is a subgraph $R' \subseteq R$ on $t' \geq (1-\gamma)t$ vertices, where $t'$ is divisible by $\lfloor\frac{k}{k-\ell}\rfloor (k-\ell)$, with $\delta^+(R') \geq \left(\dcover\right) t' + \frac{\alpha t'}{2}$ and with no isolated vertices.

    Set $w_1 := k-1$ and $w_i := \ctmod{k}{\ell} - 1$ for each $2 \leq i\leq k$. By Lemma~\ref{pfm_low_weight} $R'$ admits a $(w_1, \dots, w_k)$-weighted perfect fractional matching $q$ in $R'$ with the property that for every ordered edge $e \in E^*(R')$ the weight $q(e) \geq 0$ assigned to $e$ satisfies $q(e) \leq 2/(\alpha t') \leq d/2k^2.$

    For each $e \in E^*(R')$ in turn we now do the following. Let $U_1, \dots, U_k$ be the clusters of $e$, in the order that they appear in $e$. Choose an $\ell$-path $P_e$ in $H[U_1, \dots, U_k]$ which, for each $i \in [k]$, covers $(1-\beta/2) q(e) w_i m \pm k$ vertices of $U_i$ . We also insist that $P_e$ should be vertex-disjoint from all $\ell$-paths previously selected for previously-considered $e \in E^*(R')$.
    
    To see that this is possible, for each $i \in [k]$ let $U'_i \subseteq U_i$ be a set of $\beta m/3$ vertices of~$U_i$ which have not been covered by previously-selected paths (we justify later why such sets must exist). Since $U'_1, \dots, U'_k$ is an edge of $R'$, and therefore is an edge of $R$, we know that $H[U_1, \dots, U_k]$ is $(d, \varepsilon)$-regular, and so $d(H[U'_1, \dots, U'_k]) \geq d-\varepsilon$. By Proposition~\ref{unbalancedpath} (and the consequence described in the text immediately following the proposition) there exists a $k$-uniform $\ell$-path $Q$ which is $k$-partite with respect to vertex classes $Y_1, \dots, Y_k$ whose sizes satisfy $|Y_i| = (1-\beta/2) q(e) w_i m \pm k$ for each $i \in [k]$. Since for each $i\in [k]$ we have $|Y_i| \leq  (1-\beta/2) q(e) w_i m + k \leq q(e)km \leq (d-\varepsilon) m/k$, by Lemma~\ref{longpath} there exists a partition-respecting copy of $Q$ in $H[U'_1, \dots, U'_k]$, which is our desired path $P_e$. 
        
    Having successfully chosen an $\ell$-path $P_e$ with this property for every $e \in E^*(R')$, for each $i \in V(R')$ the total number of vertices covered in the cluster $V_i$ is
    \begin{equation}\label{vscovered}
        \sum_{i \in [k]} \sum_{e \in E_i^U(R')} \left(\left(1-\frac{\beta}{2}\right)  q(e)w_i m \pm k \right) = \left(1-\frac{\beta}{2}\right) m \pm k^2(t')^k = m - \frac{\beta m}{2} \pm \frac{\beta m}{6},
    \end{equation}
    where the first equality follows from the definition of a perfect $w$-weighted fractional matching. The upper bound expressed in~\eqref{vscovered} ensures that at all times when choosing the $\ell$-paths $P_e$ there are at least $\beta m /3$ vertices in each cluster which have not yet been covered by previously-selected paths, justifying our earlier assumption that this was the case.
    Moreover, since $m \geq (1-\varepsilon) n/t$ and $t' \geq (1-\gamma) t$, the lower bound expressed in~\eqref{vscovered} implies that the selected paths $P_e$ collectively cover at least $t' (1-2\beta/3)m \geq (1-\varepsilon)(1-\gamma)(1-2\beta/3)n \geq (1-\beta) n$ vertices of $H$, as required. Finally, since we chose one $\ell$-path $P_e$ for each $e \in E^*(R')$, the number of paths $P_e$ is at most $t^k \leq M$.
\end{proof}

\bibliographystyle{plain}
\bibliography{biblio}
\end{document}